\newtheorem{thm}{Theorem}[section]
\newtheorem{lem}[thm]{Lemma}
\newtheorem{prop}[thm]{Proposition}
\newtheorem{cor}[thm]{Corollary}
\newtheorem{df}[thm]{Definition}
\newcommand{\E}{\mathbb{E}}
\newcommand{\G}{\mathcal{G}}
\newcommand{\M}{\mathcal{M}}
\newcommand{\B}{\mathcal{B}}
\newcommand{\prob}{\mathbb{P}}
\newcommand{\R}{\mathbb{R}}
\newcommand{\Z}{\mathbb{Z}}
\newcommand{\N}{\mathbb{N}}
\newcommand{\ssigma}{\vec{\sigma}}
\newcommand{\distrib}{\overset{\mbox{law}}{=}}
\newcommand{\hilbert}{\mathcal{H}}
\numberwithin{equation}{section}
\begin{document}
\date{October 12th, 2010}
\title{Short-range spin glasses \\ and Random Overlap Structures}
\author{\Large Louis-Pierre Arguin  \thanks{L.-P. Arguin is supported by the NSF grant DMS-0604869 and partially by the Hausdorff Center for Mathematics, Bonn.} \\ \small Courant Institute, NYU \\ \small New York, NY 10012, USA
\and \Large Michael Damron\thanks{M. Damron is supported by an NSF postdoctoral fellowship.} \\ \small Princeton University \\ \small Princeton, NJ 08544, USA}

\maketitle

\begin{abstract}
Properties of Random Overlap Structures (ROSt)'s constructed from the Edwards-Anderson (EA) Spin Glass model on $\Z^d$ with periodic boundary conditions are studied. 
ROSt's are $\N\times\N$ random matrices whose entries are the overlaps of spin configurations sampled from the Gibbs measure. 
Since the ROSt construction is the same for mean-field models (like the Sherrington-Kirkpatrick model) as for short-range ones (like the EA model), the setup is a good common ground to study the effect of dimensionality on the properties of the Gibbs measure.
In this spirit, it is shown, using translation invariance, that the ROSt of the EA model possesses a local stability that is stronger
than stochastic stability, a property known to hold at almost all temperatures in many spin glass models with Gaussian couplings.
This fact is used to prove stochastic stability for the EA spin glass at all temperatures and for a wide range of coupling distributions.
On the way, a theorem of Newman and Stein about the pure state decomposition of the EA model is recovered and extended.
\end{abstract}

\section{Introduction}

In this paper, we study short-range spin glasses from the point of view of Random Overlap Structures. We focus on the Edwards-Anderson model, whose definition we now state. For any finite $\Lambda\subset \Z^d$, the set of edges such that both endpoints are in $\Lambda$ will be denoted by $\Lambda^*$.
The Hamiltonian of the EA model is defined on spin configurations $\sigma \in \{-1,+1\}^\Lambda$ as
\begin{equation}
\label{eqn: H EA}
H_{\Lambda,J}=-\sum_{\{x,y\}}J_{xy}\sigma_x\sigma_y\ ,
\end{equation}
where the summation is over all nearest-neighbor edges $\{x,y\}$ in $\Lambda^*$ as well as additional edges at the boundary that endow $\Lambda$ with periodic boundary conditions. (The choice of boundary conditions will not be important until Section~\ref{subsec: TI}.)
The set $J=(J_{xy})$ for all edges $\{x,y\}$ of $\Z^d$ are the {\it couplings} of the system. 
The definition of the EA model demands that each coupling $J_{xy}$ be distributed independently under a measure on $\R$ that is symmetric.
The joint distribution of the couplings will be denoted by $\nu(dJ)$. The {\it Gibbs measure} for the volume $\Lambda$ at couplings $J$ is defined as
\begin{equation}
\label{eqn: gibbs}
G_{\Lambda,\beta,J}(\sigma)=\frac{\exp-\beta H_{\Lambda,J}(\sigma)}{Z_{\Lambda,J}(\beta)}
\end{equation}
for $Z_{\Lambda,J}(\beta)=\sum_{\sigma\in\{-1,+1\}^\Lambda}\exp\beta H_{\Lambda,J}(\sigma)$.

A fundamental question is of the nature of infinite-volume Gibbs measures, i.e., limits as $\Lambda \to {\mathbb Z}^d$ of measures~\eqref{eqn: gibbs}. It is known that for high temperatures they have a simple structure (in fact there is only one, see, e.g., \cite{Newman}), while at low temperatures it could be that there are many ``competing'' states. A metastate, a measure on infinite-volume Gibbs states, is an object used to study these states and will be a main focus of the present paper. 
We note that the treatment of the metastate construction in Section \ref{subsec:metastates} using de Finetti's theorem is non-standard and may be of independent interest.
Because it is a measure on states, the metastate contains information on the correlation functions, local functions of spins, etc. An alternative viewpoint is to consider only the {\it overlaps} of the system (here we will look at edge overlaps, but spin overlaps can be considered as well). This is the idea of the Random Overlap Structure (ROSt). Basically, ROSt's are random matrices whose entries are the overlaps of independent replicas sampled from the Gibbs measure. Thus, ROSt's record by default all information on the diversity and structure of overlaps that is relevant for the thermodynamics.

In this paper we construct a ROSt from the EA model's metastate. 
This consists of taking the thermodynamic limit of the system before measuring the overlap in an arbitrarily large window. 
(Similar measures were considered previously by Newman \& Stein and Guerra \cite{CD_MTC}.)
We show here that the metastate and the ROSt are very closely related.
As an example, we present results about stability properties of the EA ROSt. We see that from well-studied translation-invariance properties of the EA model, it follows that its metastate obeys a local stability property. This in turn implies stochastic stability of the EA ROSt. 
Stochastic stability is known to hold for a large class of models with Gaussian couplings at almost all $\beta$ \cite{AC,Parisi,Talagrand, lp_sourav, Contucci}. 
The proof presented here holds for the EA model at all dimensions, all $\beta$, and, somewhat unexpectedly, for a wide range of coupling distributions (including, e.g., ones that are continuous with support on the real line). This is a main observation of the paper. For completeness, an adaptation to the metastate setting of the standard approach to stochastic stability is given in the appendix.

In Section~\ref{subsec: pure states} we discuss the decomposition of Gibbs states in the support of the metastate into pure states and the relation between this decomposition and the representation of the ROSt on a Hilbert space. 
In particular, we use results of \cite{lp_sourav} on the support of a stochastically stable ROSt's sampling measure to deduce a recent theorem of Newman and Stein. It says that no Gibbs state in the support of the metastate can have a (non-trivial) finite decomposition into pure states. In other words, each state must be either a combination of two flip-related pure states or infinitely many. 

In Section~\ref{sec: SK}, we draw comparisons between short-range and mean-field models from the ROSt perspective. For example, although the EA model satisfies a local stability condition, the SK model does not due to the scaling of the couplings in the definition of its Hamiltonian. 
In addition, it is unclear in the EA model if two constructions of overlaps are equivalent: one where overlaps are constructed from a metastate in which
the thermodynamic limit is taken before taking the limit of the overlap, and one where overlaps are ``global'' in the sense that
both limits are taken simultaneously. Similar issues are raised in \cite{CD_NMF}.
It is shown here that in the case of the SK model, 
permutation invariance of the spins associated to correlation functions leads to equivalence of the two types of constructions.

{\bf Acknowledgments.} We thank Chuck Newman and Dan Stein for useful discussions and guidance. L.-P. Arguin gratefully acknowledges the support and hospitality of the Hausdorff Institute for Mathematics in Bonn during the Trimester Program in Stochastics, where part of this work was completed.

\section{Definitions}
\label{section: df}

\subsection{Metastates}\label{subsec:metastates}

The existence of a metastate measure on limits of Gibbs measure as treated here
corresponds to the Aizenman-Wehr construction \cite{AW}. However, it is presented 
in a different way from the standard approach, invoking in particular de Finetti's Theorem. 
The advantage is twofold: it automatically includes a result of Newman and Stein on sampled replicas using the metastate  (Proposition 7.1 of \cite{TC}), 
and it is convenient for the construction of ROSt's.

Let $W$ be a finite subset of $\Z^d$. Let $G_{\Lambda,\beta,J}$ be the Gibbs measure \eqref{eqn: gibbs} of the EA model where
it is assumed that $\Lambda\supset W$. 
We consider the pair
\begin{equation}
\label{eqn: W config}
(J, \{\sigma_W^i\}_{i\in\N})\ ,
\end{equation}
where $\{\sigma_W^i\}_{i\in\N}$ is an infinite sequence of configurations $\sigma_W\in\{-1,+1\}^W$ sampled from $G_{\Lambda,\beta,J}$ and 
$J=(J_{xy})$ are the couplings for all the edges $\{x,y\}$ of $\Z^d$.
The set of elements \eqref{eqn: W config} is therefore endowed with the measure 
$$
M_W^{\Lambda}=\nu(dJ)\prod_{i\in\N}G_{\Lambda,\beta, J}(d\sigma_W)\ .
$$ 
 It is easily seen that the family of measures $(M_W^\Lambda)_{\Lambda\in\Z^d}$ is tight for a given $W$ (the space $\{-1,+1\}^W$ is compact). 
In particular, for any sequence $\Lambda_n$ converging to $\Z^d$ there exists a subsequence $(M_W^{\Lambda_{n_k}})_k$ that converges.
In fact, by a diagonalization argument, this subsequence of $\Lambda$'s can be chosen such that $(M_W^{\Lambda_{n_k}})_k$ converges for any finite $W$.
We denote this family of limits by $(M_W)$. This family is consistent in the sense that if $W\subset W'$, $M_{W'}$ restricted to configurations in $W$ is $M_W$.
By Kolmogorov's extension theorem, we obtain a measure $M$ on 
\begin{equation}
\label{eqn: measure M}
(J, \{\sigma^i\}_{i\in\N}), \text{ where $\sigma_i\in\{-1,+1\}^{\Z^d}$ for all $i$.}
\end{equation}
We take the topology on $\{-1,+1\}^{\Z^d}$ to be the product topology.

A notion of Gibbs measure in the infinite-volume limit is recovered as follows.
For every $\Lambda$, the conditional law of $\{\sigma_W^i\}_{i\in\N}$ given $J$ under $M_W^{\Lambda}$ is exchangeable by the sampling construction,
i.e., the distribution is invariant under permutations of finitely many indices $i_1, \ldots, i_m$. 
This symmetry is preserved in the limit $\Lambda\to\Z^d$ and it follows that
the conditional law of $\{\sigma^i\}_{i\in\N}$ given $J$ under the measure $M$ is also exchangeable. The theorem of de Finetti on exchangeable sequences of random variables taking values in a Polish space (see, e.g., \cite{aldous}) guarantees the existence of a probability measure $\kappa_J$ on the space $\M(\{-1,+1\}^{\Z^d})$ of probability measures on $\{-1,+1\}^{\Z^d}$ such that
\begin{equation}
\label{eqn: measure M2}
M=\nu(dJ)\int_{\M(\{\pm 1\}^{\Z^d})} \kappa_J(d\Gamma) \prod_{i\in\N}\Gamma\ .
\end{equation}
The measure $\kappa_J$ is called a {\it metastate} of the sequence of Gibbs measures $(\G_{\Lambda,\beta,J})_{\Lambda}$. 
It must be stressed that $\kappa_J$ might not be defined for all $J$, but by conditioning, it must exist
for $\nu$-almost all $J$.
It is customary to drop the dependence on $\beta$ in the notation $\kappa_J$.
It can be proved that the metastate is supported on $\Gamma$'s that are infinite-volume Gibbs measures in the sense of the DLR equations \cite{Newman}. 
Roughly speaking, one can see a metastate as the law of the empirical measure of a sequence of spin configurations sampled from a Gibbs measure.
The above considerations can be regrouped in a theorem, which encompasses Theorem 4.2 and Proposition 7.1 of \cite{TC}.
\begin{thm}
\label{thm:meta}
Let $(\Lambda_n)$ be a sequence of finite subsets converging to $\Z^d$. Let $G_{\Lambda_n,\beta,J}$ be the Gibbs measure in $\Lambda$ with couplings $J$. Let $\nu$ denote the i.i.d. Gaussian distribution on $J=(J_{xy})$. 
There exists a subsequence $(\Lambda_{n_k})$ such that
$$
\nu(dJ)\times \prod_{i\in\N} G_{\Lambda_{n_k},\beta,J}\to \nu(dJ)\times \int_{\M(\{\pm 1\}^{\Z^d})}\kappa_J(d\Gamma)\prod_{i\in\N}\Gamma\ ,
$$
where  $\kappa_J(d\Gamma)$ is a probability measure on $\mathcal{M}(\{-1,+1\}^{\Z^d})$, the space of probability measures on $\{-1,+1\}^{\Z^d}$.
\end{thm}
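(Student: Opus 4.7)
The plan is to follow the structure already laid out in the preceding discussion: tightness for each fixed window $W$, diagonal extraction to get a subsequence that works for every finite $W$, Kolmogorov extension to promote the family of window limits into a single measure on the infinite configuration space, and de Finetti to extract the metastate. Nothing new conceptually is needed; the task is to organize these ingredients into a single statement, while being careful about the mode of convergence and the Polish-space hypotheses.

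First I would fix a window $W \Subset \Z^d$ and consider the finite-dimensional marginal $M_W^{\Lambda_n} = \nu(dJ)\prod_{i\in\N} G_{\Lambda_n,\beta,J}(d\sigma_W^i)$ on $\R^{E(\Z^d)}\times (\{\pm 1\}^W)^{\N}$, where we put the product topology on both factors. The second factor is compact, and tightness in $J$ is immediate since the $J$-marginal is the fixed measure $\nu$. Thus $(M_W^{\Lambda_n})_n$ is tight on the product Polish space, so Prokhorov's theorem yields a weakly convergent subsequence. Enumerating the countable collection of finite $W$'s and diagonalizing produces a single subsequence $(\Lambda_{n_k})$ along which $M_W^{\Lambda_{n_k}}\to M_W$ for every finite $W$. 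The family $(M_W)$ is automatically consistent under the natural restriction maps because the prelimit family is, and consistency is preserved under weak convergence of product-topology marginals. Kolmogorov's extension theorem on the Polish space $\R^{E(\Z^d)}\times (\{\pm 1\}^{\Z^d})^{\N}$ then provides a single measure $M$ whose finite-$W$ marginals are the $M_W$.

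Next I would verify that under $M$ the sequence $\{\sigma^i\}_{i\in\N}$ is exchangeable conditional on $J$. For every $\Lambda$ and every finite permutation $\pi$, $M_W^{\Lambda}$ is invariant under the action of $\pi$ on the replica indices, because the replicas are i.i.d.\ draws from $G_{\Lambda,\beta,J}$ given $J$. This invariance passes to weak limits on the compact factor, so each $M_W$ is $\pi$-invariant, hence $M$ is $\pi$-invariant. Since $\pi$ acts trivially on $J$, this invariance is exactly exchangeability of $\{\sigma^i\}$ given $J$. Now de Finetti's theorem on the Polish space $\{\pm 1\}^{\Z^d}$ (as stated, e.g., in \cite{aldous}) gives, for $\nu$-a.e.\ $J$, a random probability measure $\kappa_J$ on $\M(\{\pm 1\}^{\Z^d})$ such that the conditional law of $\{\sigma^i\}$ given $J$ is $\int \kappa_J(d\Gamma)\prod_{i\in\N}\Gamma$. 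Disintegrating $M$ along $J$ with marginal $\nu$ yields the formula \eqref{eqn: measure M2} and hence the displayed identity in the theorem.

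The only genuinely delicate point is the bookkeeping between weak convergence on finite-$W$ marginals and weak convergence of $M_W^{\Lambda_{n_k}}$ itself on the infinite product; I would argue this by noting that weak convergence on the Polish product $\R^{E(\Z^d)}\times(\{\pm 1\}^{\Z^d})^{\N}$ is characterized by convergence of expectations of continuous functions depending on finitely many coordinates, and such functions factor through some finite $W$ and finite index set. Thus the diagonal subsequence automatically witnesses weak convergence of $M^{\Lambda_{n_k}}$ to the $M$ produced by Kolmogorov, and the theorem follows. I expect the main obstacle in the write-up to be stating de Finetti in a form that gives the measurable selection $J\mapsto \kappa_J$ in the right space, so one can make sense of writing $\kappa_J$ pointwise rather than only as a regular conditional distribution; this is handled by invoking the Polish-space version of de Finetti.
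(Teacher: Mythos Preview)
Your proposal is correct and follows essentially the same route as the paper's own argument, which is laid out in the paragraphs immediately preceding the theorem statement: tightness of $M_W^{\Lambda}$ for each finite window, diagonal extraction over the countable family of $W$'s, consistency and Kolmogorov extension to build $M$, exchangeability of the replicas given $J$ passing to the limit, and the Polish-space de Finetti theorem to obtain $\kappa_J$. Your write-up is somewhat more explicit about tightness in the $J$-factor and about the equivalence between finite-dimensional convergence and weak convergence on the full product, but there is no substantive difference in strategy.
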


The convergence in the theorem is understood in terms of finite-dimensional distributions on the couplings and on the space $\{-1,+1\}^{\Z^d}$.
A metastate for the EA model enjoys a useful covariance property in $J$ \cite{AW}. Let $J$ be a fixed set of couplings for which $\kappa_J$ is well-defined. Let $J'$ be another set of couplings such that $\kappa_{J'}$ exists and such that it equals $J$ except on a finite set of edges contained in $W\subset \Z^d$.
Write $\Delta J=(J'_{xy}-J_{x,y})_{\{x,y\}\in W^*}$.
Then the convergence of Theorem \ref{thm:meta} together with \eqref{eqn: gibbs} gives for any measurable function $F$ of the sequence $\{\sigma^i\}_{i\in\N}$
\begin{equation}
\label{eqn: covariance prop}
\int\kappa_{J'}(d\Gamma)\int F(\{\sigma^i\}) \prod_{i\in\N}d\Gamma =
\int\kappa_{J}(d\Gamma)\int F(\{\sigma^i\}) \prod_{i\in\N}d \widetilde \Gamma_i\ ,
\end{equation}
where for each $i$,
\[
\widetilde \Gamma_i(\cdot) = \frac{\Gamma(e^{\beta H_{W,\Delta J}(\sigma^i)} ~\cdot) }{\Gamma(e^{\beta H_{W,\Delta J}(\sigma^i)})}
\]
and $H_{W,\Delta J} (\sigma)= \sum_{\{x,y\} \in W^*} \Delta J_{xy} \sigma_x \sigma_y$.

\subsection{Random Overlap Structures}\label{subsec: rost}
The thermodynamic behavior of the EA model as $\beta$ is varied is intimately linked with the structure of the metastate. For this reason, it is useful to investigate this structure (e.g., on how many $\Gamma$'s is $\kappa_J$ supported, decomposition of $\Gamma$'s into extremal Gibbs measure, etc.) to determine properties of the EA model. Here we take this approach by looking more closely at the restrictions on the ROSt constructed from the metastate.

\begin{df}
\label{df: rost}
A {\it Random Overlap Structure} ({\it ROSt}) is a random $\N\times\N$-covariance matrix with $1$'s on the diagonal whose law is partially $($weakly$)$ exchangeable. In other words, if $Q=\{q_{ij}\}$ is a ROSt, then for any permutation matrix $\pi$ of a finite number of elements,
$$
\pi^{-1} Q \pi=\{q_{\pi(i)\pi(j)}\}\distrib Q\ .
$$
\end{df}
We shall write $\prob$ for the law of a ROSt and $\E$ for the expectation. Precisely, $\prob$ is defined to be a Borel probability measure on the compact Polish space of positive semi-definite symmetric ${\mathbb N} \times {\mathbb N}$ matrices with 1's on the diagonal (considered as a closed subset of $[ -1, 1]^{{\mathbb N} \times {\mathbb N}}$ equipped with the product topology). The interest in ROSt lies in the following characterization result.
\begin{thm}[Dovbysh-Sudakov \cite{DS}]
\label{thm DS}
Let $Q$ be a ROSt. There exists a random probability measure $\mu$ on the unit ball $\B$ of a Hilbert space $\hilbert$ with inner product ''$\cdot$" such that, conditionally on $\mu$,
$$
\{q_{ij}~:~ i\neq j\}\distrib \{v^i\cdot v^j~:~ i\neq j\}
$$
where $(v^i)_{i\in\N}$ are i.i.d. $\mu$-distributed and $\distrib$ denotes equality in law.
\end{thm}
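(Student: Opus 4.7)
The plan is to combine the Aldous--Hoover representation theorem for weakly exchangeable symmetric real-valued arrays with the positive semi-definiteness of $Q$, in order to realize $Q$ (off the diagonal) as a Gram matrix of i.i.d.\ samples from a random measure $\mu$ on a Hilbert space. The randomness of $\mu$ will reside in the ``directing'' variable of the Aldous--Hoover decomposition, and the Hilbert-space structure will come from the Mercer/RKHS construction applied to the resulting reproducing kernel.

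First I would apply the Aldous--Hoover theorem for jointly exchangeable symmetric arrays to write
\[
q_{ij} = f(\alpha, \xi_i, \xi_j, \eta_{ij}), \quad i \neq j,
\]
where $\alpha$, $\{\xi_i\}_{i}$, and $\{\eta_{ij}\}_{i<j}$ are i.i.d.\ Uniform$[0,1]$ (with $\eta_{ji} := \eta_{ij}$), and $f$ is measurable and symmetric in its second and third arguments. The hypothesis $q_{ii}=1$ takes care of the diagonal, and conditioning on $\alpha$ reduces matters to the dissociated (ergodic) case; the random measure $\mu$ in the conclusion will be a measurable function of $\alpha$.

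The central step is to eliminate the idiosyncratic noise $\eta_{ij}$ using positive semi-definiteness. Fix $\alpha$ and set $\bar g(x,y) := \int_0^1 f(\alpha,x,y,z)\,dz$. Conditional on the sequence $(\xi_i)$, the entries $E_{ij} := q_{ij} - \bar g(\xi_i,\xi_j)$, $i \neq j$, are centered and independent across distinct pairs $\{i,j\}$. If these fluctuations had nonzero variance on a positive-measure set of $(x,y)$, then the $N \times N$ symmetric noise matrix $E_N=(E_{ij})$ would, by standard Wigner/semicircle-law estimates, have its smallest eigenvalue behaving like $-c\sqrt{N}$ for some $c>0$. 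Combined with the bounded-norm ``mean part'' $I+\bar G_N$, this would violate $Q_N = I + \bar G_N + E_N \succeq 0$ for large $N$, the negative fluctuation being picked up by an eigendirection essentially orthogonal to the bounded-rank perturbation coming from $I+\bar G_N$. Hence $f$ must be almost surely constant in its last argument, so $q_{ij} = \bar g(\xi_i,\xi_j)$ a.s.\ for $i \neq j$.

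Given this reduction, $\bar g$ is a symmetric positive-definite kernel on $[0,1]^2$ with $\bar g(x,x) \leq 1$ for a.e.\ $x$, and the standard RKHS construction produces a separable Hilbert space $\hilbert$ together with a measurable feature map $\phi_\alpha : [0,1] \to \B$ such that $\bar g(x,y) = \phi_\alpha(x) \cdot \phi_\alpha(y)$. Define $\mu = \mu(\alpha)$ to be the pushforward of Lebesgue measure on $[0,1]$ under $\phi_\alpha$, and set $v^i := \phi_\alpha(\xi_i)$. Then conditionally on $\mu$ (equivalently, on $\alpha$), the $v^i$ are i.i.d.\ $\mu$-distributed elements of $\B$ and $q_{ij} = v^i \cdot v^j$ for $i \neq j$, as required; integrating out $\alpha$ gives the random measure $\mu$ of the theorem. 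The hard part is the noise-elimination step: converting almost-sure positive semi-definiteness of every finite submatrix into an almost-sure functional statement about the Aldous--Hoover kernel $f$. This is where the real work lies, since it requires a careful random-matrix argument showing that the $\sqrt{N}$-scale Wigner fluctuations of $E_N$ cannot be absorbed by the bounded-rank perturbation $I+\bar G_N$.
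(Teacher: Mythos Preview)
The paper does not prove this theorem; it is quoted as a known result of Dovbysh--Sudakov \cite{DS}, and the paper separately cites \cite{panchenko_ds} for a modern proof and for uniqueness of the sampling measure up to isometry. So there is no in-paper argument to compare your proposal against.

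Taken on its own, your outline follows the standard modern route: Aldous--Hoover gives $q_{ij}=f(\alpha,\xi_i,\xi_j,\eta_{ij})$, positive semi-definiteness should force the idiosyncratic noise $\eta_{ij}$ out, and the resulting symmetric kernel $\bar g$ is then realized via a feature map into a Hilbert space. The genuine gap is in the noise-elimination step. You describe $I+\bar G_N$ first as ``bounded-norm'' and then as a ``bounded-rank perturbation,'' but neither holds in general: the matrix $\bar G_N=(\bar g(\xi_i,\xi_j))_{i,j\le N}$ can have full rank and operator norm of order $N$ (take $\bar g\equiv c>0$, giving a rank-one piece of size $cN$). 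Hence the heuristic that the $-c\sqrt N$ edge of the Wigner spectrum of $E_N$ cannot be compensated by $I+\bar G_N$ is unjustified as written; a large positive eigenspace of $\bar G_N$ could in principle absorb it, and ruling this out would require delocalization of the bottom eigenvector of $E_N$ together with quantitative spectral control of $\bar G_N$ --- a substantial argument that you have not supplied. The proofs in \cite{DS} and \cite{panchenko_ds} avoid this random-matrix detour entirely: one tests positive semi-definiteness of finite principal submatrices against explicit vectors and passes to the limit using the law of large numbers on the off-diagonal cross terms, extracting directly that the conditional variance of $q_{12}$ given $(\alpha,\xi_1,\xi_2)$ vanishes.
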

\noindent
The role of $\mu$ is analogous to that of the empirical measure in the standard de Finetti's theorem for exchangeable sequences of random variables. 
We say that $\mu$ is the {\it sampling measure} of the ROSt. The sampling measure $\mu$ is not unique, however, if two sampling measures yield the same ROSt,
they differ by an isometry of $\hilbert$ that depends possibly on the realization of $\mu$ \cite{panchenko_ds}.

The law of a ROSt is determined by the finite-dimensional distributions of the entries of the matrix. Equivalently,
it is determined by the expectation over the {\it continuous functions on $s$ replicas} for every $s\in\N$.
A function $F:B^s \to\R$ mapping $\vec{v}=(v^1,...,v^s)$ to $\R$
is said to be a {\it continuous function on $s$ replicas} if it is a continous function that depends on the product
between distinct replicas, that is, of the form
\begin{equation}
\label{eqn: fct rep}
F(\vec{v})=F(v^i\cdot v^j~:~1\leq i<j\leq s)\ .
\end{equation}
In view of Theorem \ref{thm DS}, the expectation of $F$ over a ROSt $Q=\{q_{ij}\}$ then reads
$$
\E[F(q_{ij}~:~ 1\leq i<j\leq s)]=\E[\mu^{\times s}(F(v^i\cdot v^j~:~ 1\leq i<j\leq s))],
$$
where the product of $s$ copies of $\mu$ is denoted by $\mu^{\times s}$.

For spin glasses, ROSt's are useful because they encode all information on the overlaps of the system. 
By overlap, we mean a non-negative definite symmetric form on $\{-1,+1\}^{\Lambda}$. Several choices are possible. 
In this paper, we focus on {\it edge overlaps}, though a treatment for other overlaps like
spin overlaps is similar. We shall be interested in the edge overlap in the finite set $W$:
\begin{equation}
\label{eqn: local over}
R^W(\sigma,\sigma')=\frac{1}{|W^*|}\sum_{\{x,y\}\in W^*}\sigma_x\sigma_y\sigma_x'\sigma_y'\ ,
\end{equation}
where the summation is over all edges whose vertices are both contained in $W$, and for $\sigma,\sigma' \in \{-1,+1\}^{\Z^d}$, the edge overlap
\begin{equation}
\label{eqn: limit over}
R(\sigma,\sigma')=\lim_{W\to\Z^d}\frac{1}{|W^*|}\sum_{\{x,y\}\in W^*}\sigma_x\sigma_y\sigma_x'\sigma_y'\ ,
\end{equation}
whenever the limit is well-defined.
The first is a local variable and the second is a limit of local variables.

We first construct a ROSt from a metastate $\kappa_J$ of the EA model and the overlap $R^W$ as follows.
Consider the elements $(\{\sigma^i\}_{i\in\N})$ equipped with the probability measure
$$
\int_{\M(\{\pm 1\}^{\Z^d})}\kappa_J(d\Gamma)\prod_{i\in\N} \Gamma\ .
$$
We define the law of a random covariance matrix $Q^W_J=\{q^W_{ij}(J)\}$ by
\begin{equation}
\label{eqn:df Q^W}
\E\Big[F(q^W_{ij}(J)~:~1\leq i<j\leq s)\Big]=\int \kappa_J(d\Gamma) \Gamma^{\times s}\Big(F(R^W(\sigma^i,\sigma^j)~:~1\leq i<j\leq s)\Big)\ ,
\end{equation}
where $F$ is any continous function $F$ on $s$ replicas. 
In other words, the entries of $Q^W_J$ correspond to $R^W$-overlaps of replicas sampled from $\Gamma$ which in turn is sampled from the metastate $\kappa_J$.
It is also possible to define the averaged ROSt $Q^W$ from the above equations by integrating over $\nu(dJ)$. 
By construction, the law of $Q^W_J$ is weakly exchangeable. 
In particular, by Theorem \ref{thm DS}, there exists a (random) sampling measure $\mu_J^W$ on $\hilbert$
such that
\begin{equation}
\label{eqn: df mu^W}
\E \Big[F(q^W_{ij}(J)~:~1\leq i<j\leq s)\Big]=\E \Big[(\mu^W_J)^{\times s}\big(F(\vec{v})\big)\Big]\ .
\end{equation}

A ROSt for the limit $R$ of the overlap $R^W$ can  also be constructed from the metastate.
However, in this case, we need to appeal to translation invariance to ensure the existence of the limit \eqref{eqn: limit over}.
This is done in the next section.
We remark that an alternative construction of an infinite-volume ROSt would be to take $W$ growing with the size of the system $\Lambda$, for example by defining $Q^\Lambda=\{q^\Lambda_{ij}\}$ (as in \cite{lp_sourav}), 
\begin{equation}
\label{eqn:df Q^Lambda}
\E\Big[F(q^\Lambda_{ij}  ~:~1\leq i<j\leq s)\Big]=\int \nu(dJ) \ G_{\Lambda,\beta,J}^{\times s}(d\sigma)\Big(F(R^\Lambda(\sigma^i,\sigma^j)~:~1\leq i<j\leq s)\Big)\ ,
\end{equation}
and investigate the limit points of $Q^\Lambda$.
This is a procedure that could possibly lead to limits that differ from those previously constructed by taking the metastate limit $\Lambda\to\Z^d$ followed
by an overlap limit $W\to\Z^d$. We favor here the second one, since it appears to us physically more natural. As shown in Section \ref{sec: SK},
the two procedures yield the same ROSt in the mean-field case.

\section{Preliminary results}\label{subsec: TI}

From this point on, it will be assumed, unless otherwise stated, that the boundary conditions of the finite system in a box $\Lambda$ are periodic in all directions. This will imply translation invariance of infinite-volume quantities, and as a byproduct, stability of the ROSt under deterministic as well as stochastic perturbations.

Let $T=\{T_a\}$ be the translations by any vector $a$ in $\Z^d$. The operators $T_a$ act on the space of couplings and the sequence of replicas by
$$
T_a(J, \{\sigma^i\}_{i\in\N})=T_a(\{J_{xy}\}, \{\{\sigma_{x}^i\}_x\}_{i\in\N})=(\{J_{x+a,y+a}\}, \{\{\sigma_{x+a}^i\}_x\}_{i\in\N})\ .
$$
Because of the periodic boundary conditions, the measure $M$ on $(J, \{\sigma^i\}_{i\in\N})$ constructed prior to equation \eqref{eqn: measure M} is readily seen to be invariant under translation (in other words $T=\{T_a\}$ is a collection of measure-preserving maps for $M$). 
In particular, it implies that the the measure $\nu(dJ)\times\kappa_J$ on the pair $(J,\Gamma)$ is translation-invariant. (Here $T_a(\Gamma)$ is defined by $T_a(\Gamma)(A) = \Gamma(T_a(A))$ for events $A$, where $T_a(A) = \{T_a \sigma~:~\sigma\in A\}$.)
This is because $\Gamma$, being the empirical measure of $\{\sigma^i\}_{i\in\N}$, is a measurable function of $(J, \{\sigma^i\}_{i\in\N})$.

Translation invariance of the measure $M$ has two main consequences by means of Birkhoff's ergodic theorem. 
First, it provides a way to prove the existence of limits.
Typically, this is used to prove the existence of the overlap between two configurations sampled from a given measure.
Second, any $J$-measurable function that is also translation-invariant will be a constant $\nu$-almost everywhere. 
This is simply because the measure $\nu$ is ergodic, being a product measure on $J$'s.

Consequences of translation invariance have already been investigated by Newman and Stein in \cite{CD_NMF}. 
Their result is extended here to the setting of ROSt, which will be needed to prove stability of the overlap distributions in the next section.
For this purpose, we consider the edge overlap defined for $\sigma$ and $\sigma'$ in $\{-1,+1\}^{\Z^d}$ by
\begin{equation}
\label{eqn:overlap}
R(\sigma,\sigma'):=\lim_{W\to\Z^d}\frac{1}{|W^*|}\sum_{\{x,y\}\in W^*}\sigma_x\sigma_y\sigma'_x\sigma_y'\ .
\end{equation}
It is a direct consequence of the ergodic theorem that this limit exists almost surely 
under the measure $M$.
Therefore the definition of the ROSt  for the limit overlap $R$ can be done the same way as in
\eqref{eqn:df Q^W}. 
We denote the ROSt $Q_J=\{q_{ij}(J)\}$ in this case $Q_J$, and $Q$ for the averaged one. 
The definition of $Q_J$ is 
\begin{equation}
\label{eqn:df Q}
\E \Big[(F(q_{ij}(J) ~:~1\leq i<j\leq s))\Big]=\int \kappa_J(d\Gamma) \Gamma^{\times s}\Big(F(R(\sigma^i,\sigma^j)~:~1\leq i<j\leq s)\Big)\ ,
\end{equation}
and in terms of its sampling measure of Theorem \ref{thm DS} (denoted $\mu_J$),
\begin{equation}
\label{eqn:df mu}
\E \Big[\mu_J^{\times s}(F(v^i\cdot v^j~:~1\leq i<j\leq s))\Big]=\int \kappa_J(d\Gamma) \Gamma^{\times s}\Big(F(R(\sigma^i,\sigma^j)~:~1\leq i<j\leq s)\Big)\ .
\end{equation}
Since the overlap $R$ is the pointwise limit of the overlap $R^W$,
the following is straightforward from the dominated convergence theorem.
\begin{thm}
\label{thm:local}
For $\nu$-almost all $J$,
$
Q^W_J \to Q_J \text{ in law}\ .
$
\end{thm}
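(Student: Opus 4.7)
The plan is to reduce convergence in law of $Q^W_J$ to convergence of expectations of continuous functions on $s$ replicas, and then to conclude by dominated convergence, using the already-established pointwise convergence $R^W \to R$.

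Since the law of a ROSt is determined by the expectations $\E[F(q_{ij} : 1 \leq i < j \leq s)]$ for $s \in \N$ and $F$ a continuous function on $s$ replicas (as discussed just before Theorem~\ref{thm DS}), and since the space of ROSts is a compact Polish space in the product topology, convergence in law of $Q^W_J$ to $Q_J$ is equivalent to convergence of all such expectations. By the defining equations \eqref{eqn:df Q^W} and \eqref{eqn:df Q}, this amounts to showing, for $\nu$-a.e.\ $J$, every $s$, and every continuous $F$ on $s$ replicas,
\begin{equation*}
\int \kappa_J(d\Gamma)\, \Gamma^{\times s}\bigl(F(R^W(\sigma^i, \sigma^j) : i<j)\bigr) \;\longrightarrow\; \int \kappa_J(d\Gamma)\, \Gamma^{\times s}\bigl(F(R(\sigma^i, \sigma^j) : i<j)\bigr).
\end{equation*}

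To obtain almost sure pointwise convergence of the integrands, I invoke the consequence of Birkhoff's ergodic theorem noted in the text: under the translation-invariant measure $M$, the overlap limit $R^W(\sigma, \sigma') \to R(\sigma, \sigma')$ holds almost surely. Decomposing $M$ via \eqref{eqn: measure M2} and applying Fubini, for $\nu$-a.e.\ $J$ this convergence holds almost surely under $\int \kappa_J(d\Gamma)\, \prod_i \Gamma$; restricting to the first $s$ replicas, it holds simultaneously for all finitely many pairs $1 \leq i < j \leq s$ almost surely under $\int \kappa_J(d\Gamma)\, \Gamma^{\times s}$. Continuity of $F$ then gives $F(R^W(\sigma^i, \sigma^j) : i < j) \to F(R(\sigma^i, \sigma^j) : i < j)$ almost surely.

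Since $F$ is continuous on the compact cube $[-1,1]^{\binom{s}{2}}$, it is uniformly bounded, and the dominated convergence theorem delivers convergence of the integrals. The only step that requires attention is the disintegration needed to transfer $M$-a.s.\ convergence to $\int \kappa_J(d\Gamma)\, \Gamma^{\times s}$-a.s.\ convergence for $\nu$-a.e.\ $J$; this is the main (and only) bookkeeping point, and modulo it the result is, as the text suggests, a direct consequence of dominated convergence.
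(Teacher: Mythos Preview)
Your proof is correct and follows exactly the approach the paper indicates: the paper simply states that since $R$ is the pointwise limit of $R^W$, the result is ``straightforward from the dominated convergence theorem,'' and your argument spells out precisely this reasoning. The disintegration step you flag is indeed the only bookkeeping detail, and it is handled just as you describe.
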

The main observation of \cite{CD_NMF} is that translation invariance implies that the distribution of the overlap of two replicas
does not depend on the realization of the couplings $J$, i.e., for any measurable set $A$ of $\R$ and for $\nu$-almost all $J$,
\begin{equation}
\label{eqn: NS}
\int \kappa_J(d\Gamma) \ \Gamma\times\Gamma \Big\{R(\sigma^1,\sigma^2)\in A\Big\}=\int\nu(dJ)\int \kappa_J(\Gamma) \ \Gamma\times\Gamma\Big\{R(\sigma^1,\sigma^2)\in A\Big\}\ .
\end{equation}
One way to see this is as follows. $R$ is clearly a $T$-invariant function
$$
R(T_a\sigma,T_a\sigma')=R(\sigma,\sigma').
$$
Moreover, $\Gamma\times\Gamma\Big\{R(\sigma^1,\sigma^2)\in A\Big\}$ is a measurable function of $(J,\{\sigma^i\}_{i\in\N})$ since by exchangeability
$$
\lim_{s\to\infty}\frac{1}{s}\sum_{r=1}^s 1\Big\{R(\sigma^{2r-1},\sigma^{2r})\in A\Big\}=\Gamma\times\Gamma\Big\{R(\sigma^1,\sigma^2)\in A\Big\}\ .
$$
Hence $(J, \{\sigma^i\}_{i\in\N})\mapsto\Gamma\times\Gamma\Big\{R(\sigma^1,\sigma^2)\in A\Big\}$ is a $T$-invariant function and its integral over $\kappa_J$ only depends on $J$.
Equation \eqref{eqn: NS} follows by ergodicity of the measure $\nu$.

Our first result is a straightforward generalization of the above reasoning to the distribution for an arbitrary number of replicas, in other words, to the ROSt of the EA model.
\begin{thm}
\label{thm: ti}
Let $Q_J$ be the ROSt constructed in \eqref{eqn:df Q} from the metastate $\kappa_J$ and the overlap \eqref{eqn: limit over}. Denote by $Q$ the corresponding averaged ROSt. Then
$$
Q_J= Q \text{ for $\nu$-almost all $J$.}
$$
\end{thm}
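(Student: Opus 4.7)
The plan is to reduce the statement to the $s=2$ case already established in the excerpt, by showing that for every $s\in\N$ and every continuous function $F$ on $s$ replicas the map
$$\phi_F(J) \;:=\; \int \kappa_J(d\Gamma)\,\Gamma^{\times s}\bigl(F(R(\sigma^i,\sigma^j):1\leq i<j\leq s)\bigr)$$
is $\nu$-almost surely equal to the constant $\int \nu(dJ)\,\phi_F(J)$. Since the law of a ROSt is determined by its finite-dimensional distributions and the Polish space of matrices carries a countable determining family of such $F$'s, this common null set across all $F$'s yields $Q_J=Q$ for $\nu$-almost all $J$.

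First I would verify, exactly as in the $s=2$ argument, that the integrand in $\phi_F$ is a measurable, $T$-invariant function of the pair $(J,\{\sigma^i\}_{i\in\N})$. Translation invariance of the overlap, $R(T_a\sigma,T_a\sigma')=R(\sigma,\sigma')$, makes $F(R(\sigma^i,\sigma^j):1\leq i<j\leq s)$ a $T$-invariant function of the replicas. To identify $\Gamma^{\times s}(F)$ itself as a measurable function of $(J,\{\sigma^i\})$, I would use exchangeability and the de Finetti representation \eqref{eqn: measure M2} to write, $M$-almost surely,
$$\Gamma^{\times s}(F) \;=\; \lim_{n\to\infty}\frac{1}{n}\sum_{r=0}^{n-1} F\bigl(R(\sigma^{rs+i},\sigma^{rs+j}):1\leq i<j\leq s\bigr),$$
so that $\Gamma^{\times s}(F)$ is a pointwise limit of $T$-invariant measurable functions and is therefore itself $T$-invariant and measurable in $(J,\{\sigma^i\})$.

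Next I would invoke translation invariance of $M=\nu(dJ)\times\int \kappa_J(d\Gamma)\prod_i\Gamma$ (recalled at the start of Section~\ref{subsec: TI}): integrating a $T$-invariant measurable function of $(J,\{\sigma^i\})$ against the conditional law given $J$ produces a $T$-invariant measurable function of $J$, so $\phi_F$ is $T$-invariant on a set of full $\nu$-measure. Finally, ergodicity of the product measure $\nu$ under the shifts $\{T_a\}$ forces $\phi_F$ to be $\nu$-a.s. constant, equal to $\int \nu(dJ)\,\phi_F(J)$, which by definition is the expectation that characterizes the averaged ROSt $Q$. The main (and essentially only) obstacle is the measurability and $T$-invariance of $\Gamma^{\times s}(F)$ as a function of the pair $(J,\{\sigma^i\})$; this is handled uniformly in $s$ by the exchangeability-based empirical-average representation above, so no step is harder than in the $s=2$ case treated in the excerpt.
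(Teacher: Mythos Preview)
Your proposal is correct and follows essentially the same approach as the paper's proof: show that $J\mapsto\int\kappa_J(d\Gamma)\,\Gamma^{\times s}(F)$ is $T$-invariant (because $R$ is) and then invoke ergodicity of the product measure $\nu$. The paper's proof is a two-line sketch relying on the $s=2$ discussion preceding the theorem, whereas you spell out the measurability and $T$-invariance of $\Gamma^{\times s}(F)$ via the empirical-average representation over disjoint blocks of replicas; this is exactly the generalization of the paper's displayed limit for $s=2$, so there is no substantive difference in strategy.
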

\begin{proof}
The proof is direct from the triviality of the tail field of the couplings $J$ under $\nu$, and the fact that, for any $s$ and any continuous function $F$ on $s$ replicas the function
$$
J \mapsto \int \kappa_J(d\Gamma) \Gamma^{\times s}\Big(F(R(\sigma^i,\sigma^j)~:~1\leq i<j\leq s)\Big)
$$
is a $T$-invariant function, since $R$ is.
\end{proof}

\section{Main Results}
\label{section: results}

\subsection{Stability of the EA model}
\label{subsec: stability}
We first give a simple proof of stability of the ROSt defined from the metastate of the EA model and the edge overlap $R$ under a deterministic change of a finite number of couplings. We call this property of the ROSt {\it local stability}.
Afterward we will state a main result, that local stability in fact implies stochastic stability of the ROSt, a well-studied property of the Gibbs measure of spin glasses (see e.g.
\cite{AC,Parisi,Talagrand, lp_sourav, Contucci} and \cite{CG_flip} where a variation in which couplings are flipped is studied). 
Throughout the section, it will assumed that the distribution $\nu$ on the couplings is continuous and its support is $\R$.

\begin{thm}[Local Stability]
\label{thm: ls}
Let $W\subset\Z^d$ be finite and consider a deterministic collection $J'=(J'_{xy}~:~\{x,y\}\in W^*)$. 
Define $H_W'(\sigma)=  \sum_{\{x,y\}\in W^*} J_{xy}' \ \sigma_x\sigma_y$. 
For any continuous function $F$ on the $R$-overlaps \eqref{eqn:overlap} of $s$ replicas and for $\nu$-almost all $J'$,
$$
\kappa_{J}\left[\frac{\Gamma^{\times s}(e^{\beta\sum_{i=1}^sH'_W(\sigma^i)}F(\ssigma))}{\Gamma(e^{\beta H'_W(\sigma)})^s}\right]=\kappa_{J}\left[\Gamma^{\times s}(F(\ssigma))\right]\ ,
$$
where  $F(\ssigma)=F(R(\sigma^i,\sigma^j)~:~ 1\leq i<j\leq s)$. In other words we have
$$
Q_J\distrib Q'_J
$$
where $Q_J$ is the ROSt defined by \eqref{eqn:df Q} and $Q'_J$ is defined similarly with $\Gamma$ replaced by
$$
\frac{e^{ \beta H'_W(\sigma)}\Gamma(d\sigma)}{\Gamma(e^{ \beta H'_W(\sigma)})}\ .
$$
\end{thm}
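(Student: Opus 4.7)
The plan is to combine the Aizenman--Wehr covariance relation \eqref{eqn: covariance prop} with the $J$-independence of the ROSt established in Theorem \ref{thm: ti}. Extending $J'$ to a collection on all of $\Z^d$ by setting $J'_{xy}=0$ for $\{x,y\}\notin W^*$, the couplings $J+J'$ differ from $J$ only on the finite set $W^*$, so \eqref{eqn: covariance prop} applies with old couplings $J$, new couplings $J+J'$, and $\Delta J = J'$. Evaluated on the test function $F(\{\sigma^i\}) = \tilde F(R(\sigma^i,\sigma^j):\, 1\le i<j\le s)$---which is measurable and well-defined $M$-a.s.\ by the ergodic theorem underlying \eqref{eqn:overlap}---the right-hand side of \eqref{eqn: covariance prop} is precisely the left-hand side of the target identity (the shared denominator $\Gamma(e^{\beta H'_W(\sigma)})^s$ arises because the dummy variable $\sigma^i$ in each normalizing factor can be renamed), while its left-hand side is
\[
\int \kappa_{J+J'}(d\Gamma)\,\Gamma^{\times s}\bigl(\tilde F(\ssigma)\bigr).
\]

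Next, I invoke Theorem \ref{thm: ti}: for $\nu$-a.a.\ couplings $K$ one has $Q_K = Q$, hence $\int \kappa_K(d\Gamma)\,\Gamma^{\times s}(\tilde F(\ssigma)) = \E[\tilde F(Q)]$ is a $K$-independent constant. Applied simultaneously at $K=J$ and $K=J+J'$ this yields
\[
\int \kappa_{J+J'}(d\Gamma)\,\Gamma^{\times s}\bigl(\tilde F(\ssigma)\bigr) \;=\; \int \kappa_{J}(d\Gamma)\,\Gamma^{\times s}\bigl(\tilde F(\ssigma)\bigr),
\]
which, combined with the covariance identity above, is exactly the claimed equality. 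Since it holds for every continuous $\tilde F$ on $s$ replicas and every $s\in\N$, the finite-dimensional distributions of $Q_J$ and $Q_J'$ coincide, giving $Q_J\distrib Q_J'$.

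The one delicate point---and the reason the running hypothesis that $\nu$ is continuous with full support $\R$ is needed---is the joint application of Theorem \ref{thm: ti} at both $J$ and $J+J'$. The $\nu$-null exceptional set $\mathcal N$ on which either $\kappa_K$ fails to exist or $Q_K\neq Q$ must remain $\nu$-null after the deterministic shift $K\mapsto K-J'$. Because only finitely many edge couplings are perturbed and each single-edge marginal of $\nu$ is continuous with full support on $\R$, the pushforward of $\nu$ under this finite-dimensional translation is absolutely continuous with respect to $\nu$; hence $\{J : J+J'\in\mathcal N\}$ is still $\nu$-null, and Theorem \ref{thm: ti} may be invoked at both points for $\nu$-a.a.\ $J$. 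I expect the main obstacle to be less a matter of computation than of careful bookkeeping of the null sets arising from (i) the existence of $\kappa_J$, (ii) the identity $Q_J=Q$, and (iii) the almost-sure existence of the pointwise limit defining $R$; once these are aligned, the proof reduces to the two-line manipulation above.
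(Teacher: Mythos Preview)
Your proof is correct and follows exactly the paper's approach: apply the covariance relation \eqref{eqn: covariance prop} to pass from $\kappa_J$ to $\kappa_{J+J'}$, then use Theorem~\ref{thm: ti} to conclude that the $R$-overlap expectation is the same under both metastates. Your treatment of the null-set issue via absolute continuity of the finite-dimensional shift is in fact more explicit than the paper's, which simply asserts that $\kappa_{J(W)}$ is well-defined for $\nu$-almost all $J'$ without spelling out why.
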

\begin{proof}
Let $F$ be a continuous function on the $R$-overlap of $s$ replicas. 
Recall that $R$ is a $T$-invariant function. 
Consider $W$, a finite subset of $\Z^d$.
We write $J(W)$ for the set of couplings that equals $J_{x,y}$ for any edge $\{x,y\}$ with at most one endpoint in $W$
and that equals $J_{xy}+ J'_{xy}$ for any edge $\{x,y\}$ with both endpoints in $W$. Since a metastate is well-defined for $\nu$-almost
all $J$, $\kappa_{J(W)}$ is well-defined for $\nu$-almost all $J'$.
By the property \eqref{eqn: covariance prop} of $\kappa_J$, we have
\begin{equation}
\label{eqn: ls1}
\kappa_{J(W)}\left[\Gamma^{\times s}(F(\ssigma))\right]=\kappa_{J}\left[\frac{\Gamma^{\times s}(e^{ \beta \sum_{i=1}^sH'_W(\sigma^i)}F(\ssigma))}{\Gamma(e^{ \beta H'_W(\sigma)})^s}\right]\ .
\end{equation}
On the other hand, it was proved in Theorem \ref{thm: ti} that the above expectation does not depend on $J$. Therefore 
\begin{equation}
\label{eqn: ls2}
\kappa_{J(W)}\left[\Gamma^{\times s}(F(\ssigma))\right]=\kappa_{J}\left[\Gamma^{\times s}(F(\ssigma))\right]\ .
\end{equation}
The claimed identity is obtained by combining \eqref{eqn: ls1} and \eqref{eqn: ls2}.
\end{proof}

Local stability of the EA model involves a local and deterministic transformation of the metastate. For stochastic stability, the transformation is random and global.
\begin{df}
\label{df: SS}
A ROSt with sampling measure $\mu$ is said to be stochastically stable if for any $\lambda>0$, $s\in\N$,
the ROSt defined from the sampling measure
\begin{equation}
\label{eqn: mu}
\frac{\mu(dv)e^{\lambda l(v)-\frac{\lambda^2}{2}\|v\|^2}}{\mu(e^{\lambda l(v)-\frac{\lambda^2}{2}\|v\|^2})}
\end{equation}
has the same law as the original ROSt,
where $(l(v),v\in\B)$ is a (isonormal) Gaussian field on $\B$ independent of $\mu$ with $E_l[l(v)l(v')]=v\cdot v'$, and $E_l$
denotes the expectation over the field $l$.
 In other words, for any continuous function $F$ on $s$ replicas,
$$
E_l \E  \left[\frac{\mu^{\times s}\Big(F(\vec{v})e^{\lambda l(v^1)-\frac{\lambda^2}{2}\|v^1\|^2} ... e^{\lambda l(v^s)-\frac{\lambda^2}{2}\|v^s\|^2}\Big)}
{\mu\Big(e^{\lambda l(v)-\frac{\lambda^2}{2}\|v\|^2}\Big)^s}\right]
=
\E\left[\mu^{\times s}\Big(F(\vec{v})\Big)\right]\ .
$$
\end{df}
A simple justification for the appearance of $e^{-\frac{\lambda^2}{2}\|v\|^2}$ in \eqref{eqn: mu} is that the expectation $E_l e^{\lambda l(v)}$ is $e^{\frac{\lambda^2}{2}(1-\|v^2\|)}$.
 It turns out that this factor is necessary for the mapping sending $\mu$ to \eqref{eqn: mu} to be well-behaved \cite{lp_sourav}.
The contribution $e^{\frac{\lambda^2}{2}}$ is irrelevant due to the normalization, and so is the term $e^{-\frac{\lambda^2}{2}\|v\|^2}$
in the case where $\mu$ is supported on a sphere.

The assumptions of the next theorem on stochastic stability of the ROSt of the EA model 
are weaker than the previously known results on stochastic stability.
For one, it holds for a large choice of coupling distributions, as opposed to Gaussian couplings only.
Second, the proof holds at any value of the Gibbs parameter $\beta$.
General results of stochastic stability are usually based on the differentiability of the free energy in the parameter $\beta$,
see, e.g., \cite{panchenko_GG2,lp_sourav}.
Thus they are usually shown to hold at almost all values of the parameter $\beta$. 
(For the SK model, it is known from the validity of the Parisi formula that the free energy is differentiable at all $\beta$ \cite{panchenko diff}.)
The approach based on the free energy applies to the setting of the ROSt defined from a metastate, however some modifications are necessary.
This is done in Appendix~\ref{appendix}.

\begin{thm}\label{thm: stochstability}
For every $\beta>0$ and for any distribution $\nu$ on the couplings that is continuous with $\R$ as support, 
the ROSt $Q_J$ defined in \eqref{eqn:df Q} from a metastate $\kappa_J$ of the EA model is stochastically stable. 
\end{thm}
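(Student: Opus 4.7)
The plan is to derive stochastic stability from local stability (Theorem~\ref{thm: ls}) by applying the latter to a \emph{random} Hamiltonian perturbation whose finite-volume covariance is exactly $\lambda^2 R^W$, and then letting the perturbation region $W$ grow to $\Z^d$.

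Step 1 (random Gaussian perturbation). Fix $\lambda>0$, $s\in\N$, and a continuous $F$ on the $R$-overlaps of $s$ replicas. For a finite $W\subset\Z^d$, draw $g=(g_{xy})_{\{x,y\}\in W^*}$ i.i.d.\ $N(0,1)$ and set $J'_{xy}=\lambda g_{xy}/(\beta\sqrt{|W^*|})$, so that
\[
\beta H'_W(\sigma)=\frac{\lambda}{\sqrt{|W^*|}}\sum_{\{x,y\}\in W^*} g_{xy}\,\sigma_x\sigma_y
\]
is a centered Gaussian field on $\{-1,+1\}^{\Z^d}$ with $\mathrm{Cov}(\beta H'_W(\sigma),\beta H'_W(\sigma'))=\lambda^2 R^W(\sigma,\sigma')$. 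Because $\nu$ is continuous with support $\R$, the joint Gaussian law of $g$ is absolutely continuous with respect to the restriction of $\nu$ to $W^*$, so the $\nu$-null exceptional set in Theorem~\ref{thm: ls} is also null under this Gaussian law. Hence, for a.e.\ realization of $g$, local stability applies, and integrating over $g$ and swapping $E_g$ with $\kappa_J$ (Fubini applies since both sides are bounded by $\|F\|_\infty$) yields
\[
\kappa_{J}\!\left[E_g\!\left[\frac{\Gamma^{\times s}\bigl(e^{\beta\sum_i H'_W(\sigma^i)}F(\ssigma)\bigr)}{\Gamma\bigl(e^{\beta H'_W(\sigma)}\bigr)^s}\right]\right]=\kappa_{J}\bigl[\Gamma^{\times s}(F(\ssigma))\bigr].
\]

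Step 2 (passage to the limit $W\to\Z^d$). The ergodic theorem used in Section~\ref{subsec: TI} gives $R^W(\sigma^i,\sigma^j)\to R(\sigma^i,\sigma^j)$ $M$-almost surely, so conditionally on $\sigma^1,\ldots,\sigma^s$ the finite-dimensional laws of $(\beta H'_W(\sigma^i))_{i=1}^s$ converge to those of a centered Gaussian with covariance $\lambda^2 R(\sigma^i,\sigma^j)$. Identifying $R(\sigma^i,\sigma^j)$ with $v^i\cdot v^j$ via Dovbysh--Sudakov (Theorem~\ref{thm DS}), where the $v^i$ are i.i.d.\ under the sampling measure $\mu_J$ of $Q_J$, the limit Gaussian has the same law as $(\lambda l(v^i))_{i=1}^s$ with $l$ the isonormal field on $\B$ satisfying $E_l[l(v)l(v')]=v\cdot v'$ and independent of $\mu_J$. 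Therefore the displayed identity above passes in the limit to
\[
E_l\,\E\!\left[\frac{\mu_J^{\times s}\bigl(F(\vec v)\prod_{i=1}^s e^{\lambda l(v^i)}\bigr)}{\mu_J\bigl(e^{\lambda l(v)}\bigr)^s}\right]=\E\bigl[\mu_J^{\times s}(F(\vec v))\bigr].
\]
Since $\|v\|^2=R(\sigma,\sigma)=1$ for $\mu_J$-almost every $v$, the factors $e^{-\lambda^2\|v\|^2/2}=e^{-\lambda^2/2}$ in Definition~\ref{df: SS} are constants that cancel between numerator and denominator, so this is precisely the stochastic-stability identity for $Q_J$.

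The hard part is making the limit in Step 2 rigorous, because the ratio inside $E_g$ is not uniformly bounded: the denominator $\Gamma(e^{\beta H'_W(\sigma)})$ could in principle be small. My plan is to combine (i) the $\Gamma^{\times s}$-a.s.\ convergence $R^W\to R$ (which promotes the conditional Gaussian moment generating function of $(\beta H'_W(\sigma^i))_{i=1}^s$ to its $W\to\Z^d$ limit) with (ii) control of negative moments of $\Gamma(e^{\beta H'_W(\sigma)})$ via Jensen's inequality applied to $x\mapsto x^{-s}$ together with the uniform-in-$W$ variance bound $\mathrm{Var}(\beta H'_W(\sigma))=\lambda^2$, which will justify dominated convergence. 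A clean alternative is to truncate $g$ to $|g_{xy}|\le M$ so that $H'_W$ is uniformly bounded on the support, pass to the limit by dominated convergence, and remove the truncation using Gaussian tail estimates. Either route reduces what remains to a finite-dimensional Gaussian calculation driven only by $(R^W(\sigma^i,\sigma^j))_{i<j}$, whose convergence is already supplied by the ergodic theorem together with Theorem~\ref{thm: ti}.
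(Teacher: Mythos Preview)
Your overall strategy---deriving stochastic stability from local stability via a random Gaussian perturbation and sending $W\to\Z^d$---is exactly the paper's route. Step~1 is essentially correct and matches the paper. However, Step~2 has two genuine gaps.

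First, the concern you raise (small denominator) is not the real obstacle: the ratio is the tilted expectation $\tilde\Gamma^{\times s}(F(\ssigma))$ with $\tilde\Gamma(d\sigma)=e^{\beta H'_W(\sigma)}\Gamma(d\sigma)/\Gamma(e^{\beta H'_W})$, hence it is bounded by $\|F\|_\infty$ pointwise in $g$ and $\Gamma$. The actual difficulty is that the denominator $\Gamma(e^{\beta H'_W})$ is an integral over \emph{all} of $\Gamma$, not over the $s$ sampled replicas, so after integrating out $g$ the expression is \emph{not} a function of the finite matrix $(R^W(\sigma^i,\sigma^j))_{i,j\le s}$. Your claim that ``either route reduces what remains to a finite-dimensional Gaussian calculation driven only by $(R^W(\sigma^i,\sigma^j))_{i<j}$'' is therefore incorrect, and neither the Jensen bound on negative moments nor the truncation of $g$ addresses this. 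The paper's fix is to introduce $n$ independent copies of the $s$-tuple of replicas and replace the $\Gamma$-average in the denominator by the empirical average over the $n$ copies; a weak-law-of-large-numbers estimate shows the error is $O(1/\sqrt n)$ \emph{uniformly in $W$}. The resulting expression, after integrating out $g$, becomes a bounded continuous function $\overline F$ of the $ns\times ns$ matrix of $R^W$- and $R$-overlaps, to which dominated convergence ($R^W\to R$) and then the ROSt definition~\eqref{eqn:df Q} can be applied.

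Second, the assertion ``$\|v\|^2=R(\sigma,\sigma)=1$ for $\mu_J$-almost every $v$'' is wrong. The Dovbysh--Sudakov representation matches only the \emph{off-diagonal} entries $q_{ij}$, $i\neq j$; the sampling measure may well sit on vectors with $\|v\|<1$ (e.g.\ at high temperature, where the off-diagonal overlap is a constant $c<1$, $\mu_J$ is a Dirac mass at a vector of norm $\sqrt c$). On the $\sigma$-side the perturbation field has variance $\lambda^2 R^W(\sigma,\sigma)=\lambda^2$, whereas $l(v)$ has variance $\|v\|^2$. The paper bridges this by adjoining independent standard Gaussians $z^i$ with coefficients $\sqrt{1-\|v^i\|^2}$, so that $l(v^i)+z^i\sqrt{1-\|v^i\|^2}$ has the correct covariance structure; integrating out the $z^i$ then produces exactly the $e^{-\frac{\lambda^2}{2}\|v\|^2}$ factors in Definition~\ref{df: SS}. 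Without this correction your limiting identity is not the stochastic-stability identity.
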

\noindent
The proof of Theorem~\ref{thm: stochstability} will be given in Section~\ref{sec: proof}.
It can be extended to other coupling distributions, for instance continuous ones with bounded support. We restrict our attention to the case where the support is the real line for simplicity.

\subsection{Pure states and ROSt's}\label{subsec: pure states}

For each fixed coupling realization $J$, denote by ${\cal G}={\cal G}_J$ the set of all Gibbs measures with the EA Hamiltonian \eqref{eqn: H EA} (we will write ${\cal G}_J$ when we would like to emphasize dependence on $J$, although we suppress the appearance of $\beta$ in the notation). The set ${\cal G}$ is plainly convex. It is not difficult to see that it is compact as well (in the weak topology). Because the space of spin configurations is a metric space, Choquet's theorem applies, and each $\Gamma \in {\cal G}$ can be written as a convex combination of the extreme points of ${\cal G}$ (the {\it pure states}, written $ex({\cal G})$). In fact, more is true: each $\Gamma \in {\cal G}$ has a unique decomposition into pure states. To make this statement precise, we take the Borel $\sigma$-algebra $\widetilde {\cal F}$ on the space 
\[
{\cal M}=\mbox{ Borel measures on } \{-1,+1\}^{\Z^d}\ ,
\]
generated by the sets $S(f,{\cal B})= \{ \Gamma \in {\cal G} ~:~ \Gamma(f) \in {\cal B}\}$, for $f$ a continuous function on $\{-1,+1\}^{\Z^d}$ and ${\cal B}$ a Borel subset of $\R$. 
\begin{thm}\label{thm: decomposition}
For each $J$ and $\Gamma \in {\cal G}_J$, there exists a unique measure $m_\Gamma$ on $({\cal M},\widetilde {\cal F})$ such that $m_\Gamma(ex({\cal G}_J)) = 1$ and 
\[
\Gamma = \int_{{\cal M}} \rho ~m_{\Gamma}(d\rho)\ .
\]
\end{thm}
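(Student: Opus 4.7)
The plan is to present Theorem~\ref{thm: decomposition} as the classical Choquet-simplex decomposition of Gibbs measures from DLR theory, specialised to the present metrizable setup. The statement has two parts: existence of \emph{some} representing measure $m_\Gamma$, which is Choquet's theorem, and uniqueness, which requires $\mathcal{G}_J$ to be a Choquet simplex and is where the substantive content lies.

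For the framework, observe first that $\{-1,+1\}^{\Z^d}$ with the product topology is a compact metric space, so $\mathcal{M}$ is compact and metrizable in the weak topology, and the DLR equations cut out a closed (hence compact) convex subset $\mathcal{G}_J$. Choquet's theorem on a metrizable compact convex set then yields, for every $\Gamma\in\mathcal{G}_J$, a Borel probability measure $m_\Gamma$ supported on $ex(\mathcal{G}_J)$ having $\Gamma$ as its barycenter.

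For uniqueness, the natural tool is the tail $\sigma$-algebra $\mathcal{T}=\bigcap_{\Lambda}\sigma(\sigma_x:x\notin\Lambda)$, together with two standard facts from DLR theory (see e.g.\ Georgii, \emph{Gibbs Measures and Phase Transitions}, Ch.~7): a Gibbs measure is extreme in $\mathcal{G}_J$ iff $\mathcal{T}$ is trivial under it, and distinct elements of $ex(\mathcal{G}_J)$ are mutually singular on $\mathcal{T}$. From these, $m_\Gamma$ can be built explicitly: choose a regular conditional probability $\rho_\omega(\cdot)=\Gamma(\cdot\mid\mathcal{T})(\omega)$, check that $\rho_\omega\in\mathcal{G}_J$ for $\Gamma$-a.e.\ $\omega$ (the DLR equations pass through conditional expectations), and use $\mathcal{T}$-measurability of $\rho_\omega$ plus the triviality criterion to conclude $\rho_\omega\in ex(\mathcal{G}_J)$ almost surely. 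Setting
\[
m_\Gamma(A)=\Gamma\{\omega:\rho_\omega\in A\},\qquad A\in\widetilde{\mathcal{F}},
\]
the identity $\Gamma=\int\rho\, m_\Gamma(d\rho)$ is just the defining property of conditional expectation. Any other representing measure $m'$ concentrated on $ex(\mathcal{G}_J)$ would, by mutual singularity on $\mathcal{T}$, assign the same mass as $m_\Gamma$ to every $\widetilde{\mathcal{F}}$-set, forcing $m'=m_\Gamma$.

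The main obstacle is measure-theoretic bookkeeping rather than a new idea: one has to check that $\omega\mapsto\rho_\omega$ is measurable into $(\mathcal{M},\widetilde{\mathcal{F}})$ (using countable generation of $\widetilde{\mathcal{F}}$ via a countable dense family of continuous test functions, which the metrizability of $\{-1,+1\}^{\Z^d}$ supplies) and justify the martingale/tail-triviality argument placing $\rho_\omega$ in $ex(\mathcal{G}_J)$ almost surely. Since these verifications are textbook and use nothing specific to the EA Hamiltonian, I would in the final write-up present the theorem as a direct application of the standard DLR ergodic decomposition and cite Georgii or Simon's \emph{Statistical Mechanics of Lattice Gases} rather than reproducing the argument in full.
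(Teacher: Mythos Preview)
Your proposal is correct and follows essentially the same route as the paper: existence via Choquet's theorem on the metrizable compact convex set $\mathcal{G}_J$, and uniqueness via the Choquet simplex property of $\mathcal{G}_J$, with a citation to Simon (or Georgii). The paper's proof is a two-line appeal to Theorems~I.5.9 and III.2.4 of Simon's \emph{Statistical Mechanics of Lattice Gases}; you have unpacked the simplex argument via the tail $\sigma$-algebra, which is precisely the content of the cited theorem, so your write-up would be a slightly more self-contained version of the same proof.
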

\begin{proof}
This theorem is a consequence of the Choquet-Meyer decomposition (see, e.g., Theorem~I.5.9 of \cite{Simon}) and the fact that the set $ex({\cal G}_J)$ is a simplex \cite[Theorem~III.2.4]{Simon}. Note that the latter theorem is stated in \cite{Simon} for translation-invariant Hamiltonians but the proof is valid for  the EA Hamiltonian with arbitrary nearest-neighbor coupling configurations $(J_{xy})$.
\end{proof}

Theorem~\ref{thm: decomposition} allows us, for a given coupling configuration $J$, to decompose Gibbs measures in the support of the metastate $\kappa_J$ into pure states. Furthermore, since the overlap $R = \lim_{W \to {\mathbb Z}^d} R^W$ exists $M$-almost surely (recall the definition of $M$ in \eqref{eqn: measure M2}), we see that for $\nu(dJ)\times \kappa_J(d\Gamma)$-almost all $\Gamma$, the associated measure $m_\Gamma$ must be supported on pure states with this same property. Using the fact that pure states have trivial tail fields (see, e.g., \cite[Theorem~III.2.5]{Simon}) and mixing properties of pure states, one may argue that for $\nu(dJ)\times\kappa_J(d\Gamma)$-almost all $\Gamma$, if we sample two pure states $\rho_1$ and $\rho_2$ independently from $m_\Gamma$, then the variable $R$ is $\rho_1 \times \rho_2$-a.s. a constant. (A sketch of the proof of this fact will be given in Appendix~\ref{sec: appb}.) We denote this value of the overlap $\rho_1 \cdot \rho_2$.

In some cases, we may draw an explicit correspondence between pure states in the support of $m_\Gamma$ (for $\Gamma$ in the support of $\kappa_J$) and vectors in the support of the sampling measure $\mu_J$ of the ROSt $Q_J$. For this purpose, we will write $Q_J$ in a slightly different way. Define the matrix-valued map $P$ on the space of $(J,\{\sigma^i\}_i)$'s by setting $P((J,\{\sigma^i\}_i)) = \widetilde Q$, where
\[
(\widetilde Q)_{i,j} = R(\sigma^i,\sigma^j)\ .
\]
This map is defined on a set of $M$-probability one. $P$ is a Borel measurable transformation to the space of covariance matrices introduced in Section~\ref{subsec: rost}. Note that for $\nu$-almost all coupling configurations $J$, the push-forward of the regular conditional probability measure $M(\cdot~|~J)$ by the map $P$ is exactly equal to the law of the ROSt $Q_J$, defined in Section~\ref{subsec: rost}. For a given value of $(J,\Gamma)$, we shall also condition on the value of $\Gamma$ and denote by ${\mathbb P}_{J,\Gamma}$ the push-forward of the conditional probability $M(\cdot~|~(J,\Gamma))$ by the map $P$. Since the conditional law of $\{\sigma^i\}_i$ is i.i.d. given $(J,\Gamma)$, it is clear that for $\nu(dJ)\times\kappa_J(d\Gamma)$-almost all $(J,\Gamma)$, the measure ${\mathbb P}_{J,\Gamma}$ is weakly exchangeable (i.e., it the law of a ROSt) and if we denote by $\mu_{J,\Gamma}$ its sampling measure, then for any continuous function $F$ of $s$ replicas, we have
\[
\E_{J,\Gamma}\left[ \mu_{J,\Gamma}^{\times s} (F(v^i \cdot v^j~:~1 \leq i<j\leq s)) \right] = \Gamma^{\times s}(F(R(\sigma^i,\sigma^j)~:~1\leq i < j\leq s))\ ,
\]
so that, if ${\mathbb P}$ is the law of $Q_J$, then
\[
\E \left[ \mu_J (F(v^i \cdot v^j~:~1 \leq i<j\leq s)) \right] = \int \kappa_J(d\Gamma) \E_{J,\Gamma}\left[ \mu_{J,\Gamma}^{\times s} (F(v^i \cdot v^j~:~1 \leq i<j\leq s)) \right]\ .
\]

The following was proved in \cite{lp_sourav}.
\begin{thm}
\label{thm:dim}
Let ${\mathbb P}$ be the law of a stochastically stable ROSt and let $\mu$ be its sampling measure. Then
\[
{\mathbb P}(\mu \mbox{ is supported on a single vector or an infinite-dimensional subset of } \B) = 1\ .
\]
\end{thm}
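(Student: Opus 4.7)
The plan is to argue by contradiction: suppose that with positive probability the sampling measure $\mu$ spans a subspace $V\subset\hilbert$ of finite dimension $k\geq 1$ and is not a single point mass. The dimension of the span of $\mathrm{supp}(\mu)$ is measurable with respect to the ROSt, since it equals $\lim_n \mathrm{rank}\bigl((q_{ij})_{1\leq i,j\leq n}\bigr)$ by the Dovbysh--Sudakov representation; so by conditioning I may assume $\mu$ is a random probability measure supported in a bounded subset of a fixed $V\cong \R^k$. Choosing a measurable orthonormal basis of $V$, the isonormal Gaussian field restricts to $l(v)=g\cdot v$ with $g=(g_1,\ldots,g_k)$ a standard $k$-dimensional Gaussian, and the stochastic stability identity of Definition~\ref{df: SS} becomes a concrete finite-dimensional integral identity.

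The second step is to derive Ghirlanda--Guerra-type identities for the overlap distribution from stochastic stability. Since $\mu$ is supported in a bounded set, both numerator and denominator in \eqref{eqn: mu} are real-analytic and bounded in $\lambda$; so one can expand the stochastic stability identity as a power series in $\lambda$ and match coefficients. Carrying this out to order $\lambda^2$ and using $E_l[l(v)l(v')]=v\cdot v'$ to perform the Gaussian integrations gives, for continuous $f$ on the overlaps of $s$ replicas and continuous $h$ on a single overlap,
\[
\E\bigl[\mu^{\times(s+1)}\bigl(f(\vec v)\cdot h(v^1\cdot v^{s+1})\bigr)\bigr] \;=\; \tfrac{1}{s}\,\E\bigl[\mu^{\times 2}(h)\bigr]\,\E\bigl[\mu^{\times s}(f)\bigr] \;+\; \tfrac{1}{s}\sum_{\ell=2}^{s}\E\bigl[\mu^{\times s}\bigl(f(\vec v)\cdot h(v^1\cdot v^{\ell})\bigr)\bigr].
\]
A well-known consequence (Panchenko) is that the support of the overlap distribution of $\mu$ is ultrametric and is either a single point or infinite.

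The final step is to rule out both alternatives under the finite-dimensional assumption. If $v^1\cdot v^2=q_0$ holds $\mu\otimes\mu$-a.s., then $\mathrm{Var}_{\mu\otimes\mu}(v^1\cdot v^2)=0$; writing this variance as $\mathrm{tr}(K^2)+2\,m^\top K m$, where $m=\int v\,d\mu(v)$ and $K$ is the covariance operator of $\mu$, and using $K\succeq 0$, we get $K=0$, i.e., $\mu$ is a delta mass, contradicting non-triviality. If instead the overlap distribution takes infinitely many values, the ultrametric tree structure of the samples forces arbitrarily large generic Gram matrices (a generic ultrametric Gram matrix of $n$ distinct leaves has rank $n$), so $\mathrm{dim}(V)=k$ is incompatible with unboundedly many distinct overlap values appearing with positive probability. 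Either outcome contradicts the finite-dimensional, non-delta assumption, completing the argument.

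The main obstacle is the careful passage from stochastic stability to the Ghirlanda--Guerra identities for the \emph{random} sampling measure $\mu$: the Taylor expansion in $\lambda$ must be justified uniformly in $\mu$, the expectations over $g$ and over $\mu$ must be interchangeable, and error terms in the denominator $\mu(e^{\lambda l(v)-\lambda^2\|v\|^2/2})^s$ must be controlled uniformly. Once these identities are in hand, the ultrametric/finite-dimensional incompatibility is essentially a Gram-matrix rank obstruction, and the single-overlap case is an elementary covariance-operator calculation.
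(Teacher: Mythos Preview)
First, note that the paper itself does not prove this theorem; it is quoted from \cite{lp_sourav} (Arguin--Chatterjee), so there is no proof in the present paper to compare against directly.

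Your proposal has a genuine gap at the second step. The identity you write down,
\[
\E\bigl[\mu^{\times(s+1)}\bigl(f\,h(v^1\cdot v^{s+1})\bigr)\bigr]
=\tfrac{1}{s}\,\E\bigl[\mu^{\times 2}(h)\bigr]\,\E\bigl[\mu^{\times s}(f)\bigr]
+\tfrac{1}{s}\sum_{\ell=2}^{s}\E\bigl[\mu^{\times s}\bigl(f\,h(v^1\cdot v^{\ell})\bigr)\bigr],
\]
is the Ghirlanda--Guerra identity, and it does \emph{not} follow from stochastic stability by Taylor expansion in $\lambda$. If you carry out the order-$\lambda^2$ expansion carefully, the contribution of the denominator $\mu(e^{\lambda l(v)-\lambda^2\|v\|^2/2})^s$ produces, after the $E_l$ average, a term of the form
\[
\E\bigl[\mu^{\times s}(f)\cdot\mu^{\times 2}(h)\bigr]
=\E\bigl[\mu^{\times(s+2)}\bigl(f\,h(v^{s+1}\cdot v^{s+2})\bigr)\bigr],
\]
a \emph{joint} expectation over two additional replicas, not the factorized product $\E[\mu^{\times 2}(h)]\,\E[\mu^{\times s}(f)]$. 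What stochastic stability actually yields are the Aizenman--Contucci identities, which are strictly weaker than Ghirlanda--Guerra; the factorization is exactly what separates the two and requires an additional self-averaging input (concentration of the free energy, or equivalently vanishing of certain variances) that is not part of the hypothesis of Definition~\ref{df: SS}. The ``main obstacle'' you flag is therefore not a matter of controlling error terms uniformly in $\mu$: the target identity is simply false at this level of generality.

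Because of this, step~3 collapses as well: Panchenko's ultrametricity theorem needs the full Ghirlanda--Guerra identities, so you cannot invoke it from stochastic stability alone. The argument in \cite{lp_sourav} does not pass through Ghirlanda--Guerra or ultrametricity; it works directly with the stochastic-stability invariance (and its Aizenman--Contucci consequences) to show, by a finite-dimensional computation, that a nontrivial $\mu$ supported in $\R^k$ cannot have its overlap law preserved under the random tilt $\mu\mapsto e^{\lambda g\cdot v-\lambda^2\|v\|^2/2}\mu/Z$ with $g$ standard Gaussian in $\R^k$. Your step~4 (the covariance-operator calculation $\mathrm{Var}_{\mu\otimes\mu}(v^1\cdot v^2)=\mathrm{tr}(K^2)+2\,m^{\top}Km$ forcing $K=0$) is correct and is indeed the sort of elementary ingredient that belongs in such a proof; what is missing is a replacement for steps~2--3 that stays within what stochastic stability actually delivers.
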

By Theorem~\ref{thm: stochstability}, the EA ROSt is stochastically stable at all inverse temperatures $\beta$. Therefore, we may apply Theorem~\ref{thm:dim}. It follows from the above construction that for $\nu(dJ)\times\kappa_J(d\Gamma)$-almost all $(J,\Gamma)$, the sampling measure $\mu_{J,\Gamma}$ is ${\mathbb P}_{J,\Gamma}$-a.s. supported on a single vector or an infinite-dimensional subset of $\B$.

For certain $(J,\Gamma)$'s we may construct the sampling measure explicitly. Suppose that $(J,\Gamma)$ is such that there is an integer $N$ with
\[
m_\Gamma = \sum_{i=1}^N w_i \delta_{\rho_i}\ ,
\]
for pure states $(\rho_i)$ and weights $(w_i)$ satisfying $\sum_{i=1}^N w_i=1$. Partition the set $\{\rho_i~:~1\leq i\leq N\}$ into equivalence classes of {\it congruent} pure states, i.e., use the equivalence relation
\[
\rho_i \sim \rho_j \mbox{ iff } \rho_i\cdot\rho_k = \rho_j\cdot\rho_k \mbox{ for all } 1 \leq k \leq N\ .
\]
Let $n_C$ be the number of equivalence classes, select a set of representatives $\widetilde \rho_1, \ldots, \widetilde \rho_{n_C}$ from the equivalence classes $C_1, \ldots, C_{n_C}$ and define the weights $(\widetilde w_i)$ by
\[
\widetilde w_i = \sum_{j~:~\rho_j \in C_i} w_j\ .
\]
Two pure states in different classes are called {\it incongruent}. Since the matrix $\widetilde A$, defined by
\begin{equation}\label{eq: matrixA}
(\widetilde A)_{i,j} = \widetilde \rho_i \cdot \widetilde \rho_j\ ,
\end{equation}
is positive semi-definite, it follows that we can find $n_C$ vectors $v_1, \ldots, v_{n_C}$ in $\R^{n_C}$ such that $v_i\cdot v_j = \widetilde\rho_i\cdot\widetilde\rho_j$ for all $1 \leq i\leq j\leq n_C$. It is not difficult now to see that if we create a ROSt using the sampling measure 
\[
\widetilde \mu_{J,\Gamma} = \sum_{i=1}^{n_C} \widetilde w_i \delta_{\widetilde v_i}
\]
on $\R^{n_C}$, then the law of this ROSt is the same as ${\mathbb P}_{J,\Gamma}$. Since the sampling measure of a ROSt is unique up to Hilbert space isometry (see \cite{lp_sourav}), we thus arrive at the following result of Newman and Stein \cite{NS 09}:

\begin{cor}
With $\nu(dJ) \times \kappa_J(d\Gamma)$-probability one, the state $\Gamma$ cannot be the sum of $N$ mutually incongruent pure states for $1<N<\infty$.
\end{cor}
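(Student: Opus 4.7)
The strategy is to contradict Theorem~\ref{thm:dim}, applied to the ROSt $Q_J$, using the explicit sampling measure constructed in the paragraph preceding the corollary. For each integer $N$ with $2\le N<\infty$ let $E_N$ denote the event, in the space of pairs $(J,\Gamma)$ equipped with $\nu\otimes\kappa_J$, that $m_\Gamma = \sum_{i=1}^{N} w_i \delta_{\rho_i}$ for pairwise incongruent pure states $\rho_1,\dots,\rho_N$. Since the conclusion of the corollary is the vanishing of $(\nu\otimes\kappa_J)(E_N)$ for every such $N$, a countable union reduces matters to a single fixed $N$.

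For any $(J,\Gamma)\in E_N$, the construction in the text before the statement exhibits a version of the sampling measure of $\mathbb{P}_{J,\Gamma}$, namely the deterministic discrete measure
\[
\widetilde\mu_{J,\Gamma}=\sum_{i=1}^{N}\widetilde w_i\,\delta_{\widetilde v_i}
\]
on $N$ distinct vectors of $\R^N\hookrightarrow\B$ (the $\widetilde v_i$ are distinct precisely because the pure states are pairwise incongruent, so the Gram matrix $\widetilde A$ in \eqref{eq: matrixA} has no repeated row). Since the Dovbysh-Sudakov sampling measure is unique up to a Hilbert-space isometry, every version of $\mu_{J,\Gamma}$ then has support of cardinality exactly $N$, and in particular is neither a single vector nor contained in an infinite-dimensional subset of $\B$.

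Next I would invoke the identity displayed just above the corollary,
\[
\E\bigl[\mu_J^{\times s}(F(v^i\cdot v^j:i<j))\bigr]=\int\kappa_J(d\Gamma)\,\E_{J,\Gamma}\bigl[\mu_{J,\Gamma}^{\times s}(F(v^i\cdot v^j:i<j))\bigr],
\]
which asserts that the law of the random sampling measure $\mu_J$ of $Q_J$ is the $\kappa_J$-mixture of the laws of the $\mu_{J,\Gamma}$. Equivalently, $\mu_J$ may be realized by first drawing $\Gamma\sim\kappa_J$ and then drawing $\mu_J$ from the law of $\mu_{J,\Gamma}$. Combined with the previous paragraph, this gives, for $\nu$-a.e.\ $J$,
\[
\mathbb{P}\bigl(\mu_J\text{ has finite support of size in }[2,\infty)\bigr)\ \ge\ \kappa_J\bigl(\{\Gamma:(J,\Gamma)\in E_N\}\bigr).
\]
By Theorem~\ref{thm: stochstability}, $Q_J$ is stochastically stable, so Theorem~\ref{thm:dim} forces the left-hand side to vanish. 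Hence $\kappa_J\bigl(\{\Gamma:(J,\Gamma)\in E_N\}\bigr)=0$ for $\nu$-a.e.\ $J$; integrating in $J$ and taking a union over $N\ge 2$ completes the proof.

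The one point requiring real care is the measurable identification of the law of $\mu_J$ with the $\kappa_J$-mixture of the laws of $\mu_{J,\Gamma}$; this is a conditional form of Dovbysh-Sudakov and is essentially the content of the formula the authors have already displayed, so the remainder is bookkeeping that assembles the explicit construction with Theorems~\ref{thm: stochstability} and \ref{thm:dim}.
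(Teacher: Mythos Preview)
Your proof is correct and follows essentially the same approach as the paper: the corollary is presented there as an immediate consequence of the explicit sampling-measure construction in the preceding paragraph together with Theorems~\ref{thm: stochstability} and~\ref{thm:dim}, and you have simply made the disintegration and contradiction argument explicit. (One cosmetic point: ``nor contained in an infinite-dimensional subset of $\B$'' should read ``nor spans an infinite-dimensional subspace,'' since every subset of $\B$ is trivially contained in an infinite-dimensional set; this does not affect the argument.)
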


Note that the result here is slightly stronger. One can repeat the construction above with small modifications to produce a sampling measure for each $\Gamma$ that is a countably infinite sum of pure states. In that case, the argument rules out a matrix $\widetilde A$ (from \eqref{eq: matrixA}) that has finite rank.

\section{Relation to mean-field models}\label{sec: SK}

Here we draw comparisons between the ROSt's of the EA model and the ones of the Sherrington-Kirkpatrick (SK) mean-field spin glass. The SK model is defined as follows. Fix an integer $N >0$ and let $\{J_{xy}~:~x\neq y;~  x,y \in {\mathbb N}\}$ be a family of i.i.d. mean-zero Gaussian variables with distribution $\nu$. The SK Hamiltonian is defined for spin configurations $\sigma \in \{-1,+1\}^N$ by
\begin{equation}
\label{eqn: H SK}
H_{N,J} = -\frac{1}{\sqrt N} \sum_{1 \leq x<y \leq N} J_{xy} \sigma_x \sigma_y
\end{equation}
and the corresponding measure
\begin{equation}\label{eqn: gibbs SK}
G_{N,\beta,J}(\sigma)=\frac{\exp-\beta H_{N,J}(\sigma)}{Z_{N,J}(\beta)}\ ,
\end{equation}
for $Z_{N,J}(\beta) = \sum_{\sigma\in\{-1,+1\}^N}\exp-\beta H_{N,J}(\sigma)$.
The spin overlap will be denoted by 
$$
R^N(\sigma,\sigma')=\frac{1}{N} \sum_{x=1}^N\sigma_x\sigma_x'\ .
$$
Since there is an edge between every two vertices, the edge overlap of the model is (up to a term of order $1/N$) simply half the square of the spin overlap.

The construction of a metastate of Section \ref{subsec:metastates} can be repeated {\it verbatim} for the SK model using the joint distribution of $J= \{J_{xy}~:~ x \neq y; x, y \in {\mathbb N}\}$ and spin configurations $\{\sigma_N^i\}_{i \in {\mathbb N}}$ sampled from \eqref{eqn: gibbs SK}. 
(It can also be done for other mean-field models. The reader is referred to \cite{kuelske} for examples
of the metastate framework applied to the Curie-Weiss model and the Hopfield model.)
Let $\kappa_J$ be a metastate constructed from the sequence $(G_{N,\beta,J})_N$. 
The measure 
$$
M=\nu(dJ)\times\int_{\mathcal{M}(\{\pm1\}^\N)} \kappa_J(d\Gamma)\prod_{i\in\N}\Gamma
$$
on elements $(J,\{\sigma^i\})$ clearly has stronger symmetries than the EA equivalent, for which only translation-invariance held.
Indeed, any permutation $\pi$ of finite elements of $\N$ acts on $(J,\{\sigma^i\})$ as follows
$$
\pi(J,\{\sigma^i\})= (\{J_{\pi(x)\pi(y)}\}, \{\{\sigma^i_{\pi(x)}\}_x\}_i)\ .
$$
Since the laws of the Gibbs measures $(G_{N,\beta,J})_N$  are preserved
under these transformations by the form of the Hamiltonian \eqref{eqn: H SK}, the symmetry is inherited by the measure $M$. 
This symmetry was recently exploited in \cite{panchenko_rep} to obtain a representation of the free energy of the SK model.

In the case of the SK model, the metastate is no longer supported on Gibbs measures in the sense of the DLR equations, 
as this notion is not defined for the graph with edge set $\{\{x,y\}~:~ x \neq y;~x,y \in {\mathbb N}\}$. 
For this reason, the notion of pure state is no longer well-defined. 
However, as it was discussed in Section \ref{subsec: pure states}, 
there is a connection between the pure state decomposition and
the sampling measure of the ROSt for the EA model. 
As a result, we choose to study the sampling measure of the SK model as the analogue of a pure state decomposition.

We define the ROSt $Q_J=\{q_{ij}(J)\}$ constructed from the metastate $\kappa_J$ as in \eqref{eqn:df Q} for the EA model
\begin{equation}
\label{eqn:df Q SK}
\E \Big[(F(q_{ij}(J) ~:~1\leq i<j\leq s))\Big]=\int \kappa_J(d\Gamma) \Gamma^{\times s}\Big(F(R(\sigma^i,\sigma^j)~:~1\leq i<j\leq s)\Big)\ ,
\end{equation}
where
$$
R(\sigma,\sigma')=\lim_{N\to\infty}R^N(\sigma,\sigma')\ .
$$
The above limit can be shown to exist $(\Gamma\times \Gamma\ \kappa_J(d\Gamma))$-almost surely using de Finetti's theorem and invariance under permutations of 
the distribution of the spins.
Moreover, as in the case of the EA model, the law of $Q_J$ equals the law of the $\nu$-averaged ROSt $Q$ for $\nu$-almost all $J$. 
(This is a consequence of equation \eqref{eqn: covariance prop2} below.)
It is also possible to define a limit ROSt by taking the limit of the overlap and of the Gibbs measure simultaneously
as in \eqref{eqn:df Q^Lambda}.
Let 
$Q^N=\{q^N_{ij}\}$ be the ROSt defined by
\begin{equation}
\label{eqn:df Q^N SK}
\E \Big[(F(q^N_{ij}~:~1\leq i<j\leq s))\Big]=\int \nu(dJ)\  G_{N,\beta,J}^{\times s}\Big(F(R^N(\sigma^i,\sigma^j)~:~1\leq i<j\leq s)\Big)\ .
\end{equation}
In the following proposition, we show that the mean-field nature of the model guarantees that a subsequence of $Q^N$ converges to $Q$ defined by \eqref{eqn:df Q SK}. As pointed out below equation \eqref{eqn:df Q^Lambda},
this is not necessarily the case for the EA model.
\begin{prop}
Let $(N_k)$ be a subsequence for which $(G_{N_k,\beta,J})_k$ converges to the metastate $\kappa_J$ in the sense of Theorem \ref{thm:meta}. Then the ROSt's $(Q^{N_k})$ defined by \eqref{eqn:df Q^N SK} converge in law to $Q$ defined by \eqref{eqn:df Q SK}.
\end{prop}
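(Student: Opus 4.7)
The plan is to show $\E[F(q^{N_k}_{ij}:1\leq i<j\leq s)]\to \E[F(q_{ij}:1\leq i<j\leq s)]$ for every continuous function $F$ on $s$ replicas, which determines weak convergence of the ROSt laws. The strategy is to insert an intermediate truncated overlap $R^W(\sigma^i,\sigma^j):=\frac{1}{W}\sum_{x=1}^{W}\sigma^i_x\sigma^j_x$ for a fixed window $W$ and run a $3\varepsilon$ argument, controlling the two limits $k\to\infty$ and $W\to\infty$ separately.

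First, for any fixed $W$ the map $F(R^W(\sigma^i,\sigma^j)~:~i<j\leq s)$ is a bounded continuous function of the finitely many spin coordinates $\{\sigma^i_x~:~1\leq i\leq s,\ 1\leq x\leq W\}$. Applying the SK analogue of Theorem~\ref{thm:meta} (which, as noted in the excerpt, is obtained by the same construction) along the subsequence $(N_k)$ therefore yields
\[
\int \nu(dJ)\, G_{N_k,\beta,J}^{\times s}\!\left(F(R^W)\right)\;\longrightarrow\;\int \nu(dJ)\!\int\!\kappa_J(d\Gamma)\,\Gamma^{\times s}\!\left(F(R^W)\right)
\]
as $k\to\infty$. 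Combined with the almost-sure existence of $R=\lim_W R^W$ under $\nu(dJ)\kappa_J(d\Gamma)\Gamma\times\Gamma$ (the de Finetti argument noted just after \eqref{eqn:df Q SK}) and dominated convergence, letting $W\to\infty$ brings the right-hand side to $\E[F(q_{ij})]$.

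The essential step is a uniform-in-$k$ bound on the error of replacing $R^{N_k}$ by $R^W$ in the finite-volume expectation. Here I would exploit the full mean-field symmetry: the measure $\nu(dJ)\, G_{N_k,\beta,J}^{\times s}$ is invariant under the joint action of $S_{N_k}$ on couplings and spin labels, since both the SK Hamiltonian \eqref{eqn: H SK} and the i.i.d.\ law $\nu$ have this symmetry. Consequently, for every replica pair $(i,j)$ the variables $X_x:=\sigma^i_x\sigma^j_x$ are exchangeable on $\{1,\ldots,N_k\}$, and a short second-moment computation using only $|X_x|\leq 1$ yields
\[
\E\!\left[\bigl(R^W(\sigma^i,\sigma^j)-R^{N_k}(\sigma^i,\sigma^j)\bigr)^2\right]\;=\;\Bigl(\tfrac{1}{W}-\tfrac{1}{N_k}\Bigr)\bigl(1-\E[X_1X_2]\bigr)\;\leq\;\frac{2}{W}
\]
for every $k$ with $N_k\geq W$. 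Approximating a general continuous $F$ on the compact cube $[-1,1]^{\binom{s}{2}}$ by a Lipschitz function and applying Cauchy-Schwarz over the finitely many replica pairs then gives
\[
\sup_{k~:~N_k\geq W}\left|\int\nu(dJ)\, G_{N_k,\beta,J}^{\times s}\bigl(F(R^{N_k})-F(R^W)\bigr)\right|\;\longrightarrow\;0\qquad\text{as }W\to\infty.
\]

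Combining these three pieces by a $3\varepsilon$ argument finishes the proof. The main obstacle is the last uniform estimate: its proof relies crucially on the full $S_N$ symmetry of the averaged measure, and it is exactly this feature, absent from the EA model, that allows one to interchange the limits $N_k\to\infty$ and $W\to\infty$. This is what makes the two constructions of $Q$ coincide in the mean-field case, whereas only translation invariance is available in the EA case and such an exchange is not guaranteed.
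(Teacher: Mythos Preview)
Your proof is correct and takes a genuinely different route from the paper's. The paper reduces to monomials $F(q_{ij})=\prod_{i<j}q_{ij}^{n_{ij}}$ via Stone--Weierstrass, expands each $R^{N_k}$ as a multi-sum of spin products, and then uses the $S_{N_k}$-invariance of $\nu(dJ)G_{N_k,\beta,J}^{\times s}$ to replace the whole sum, up to an $O(1/N_k)$ error, by a single fixed correlation function $\prod_{i<j}\sigma_{x_1^{i,j}}^{i}\sigma_{x_1^{i,j}}^{j}\cdots\sigma_{x_{n_{ij}}^{i,j}}^{i}\sigma_{x_{n_{ij}}^{i,j}}^{j}$ at distinct sites; the same reduction is done on the metastate side, and the two are matched by finite-dimensional convergence. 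Your argument instead keeps $F$ general, inserts the window overlap $R^W$, and uses exchangeability of the $X_x=\sigma^i_x\sigma^j_x$ only through the clean second-moment identity $\E[(\bar X_W-\bar X_{N_k})^2]=(1-\E[X_1X_2])(\tfrac{1}{W}-\tfrac{1}{N_k})$, which is exactly what powers the uniform-in-$k$ bound.

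Both approaches exploit the same $S_N$ symmetry, but in complementary ways: the paper's combinatorial reduction makes explicit that overlap moments are, up to $O(1/N)$, fixed spin correlation functions (a fact of independent interest and with an explicit rate), whereas your second-moment/exchangeability bound is more probabilistic, avoids the bookkeeping of the multi-index expansion, and makes the $3\varepsilon$ structure and the interchange of limits completely transparent. Either argument is adequate here; yours is arguably the more economical way to see why the two ROSt constructions coincide for SK.
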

\begin{proof}
Let $F$ be a continuous function on $s$ replicas of the form 
$$
F(q_{ij}~:~ 1\leq i<j\leq s)= \prod_{i<j}q_{ij}^{n_{ij}}\ ,
$$
for a collection of integers $n_{ij}$. It is easily checked by, for example, the Stone-Weierstrass theorem that any continuous function on $s$ replicas can be uniformly approximated by sums of functions of this form.
We need to show that 
\begin{equation}
\label{eqn:to show}
\begin{aligned}
&\lim_{k\to\infty}\int \nu(dJ) G_{N_k,\beta,J}^{\times s}\Big(F(R^{N_k}(\sigma^i,\sigma^j)~:~1\leq i<j\leq s)\Big)=\\
&\hspace{2cm}\lim_{N\to\infty}\int \nu(dJ) \int \kappa_J(d\Gamma) \Gamma^{\times s}\Big(F(R^N(\sigma^i,\sigma^j)~:~1\leq i<j\leq s)\Big)\ .
\end{aligned}
\end{equation}
The function $F$ can be written as
\[
F(R^{N_k}(\sigma^i,\sigma^j)~:~1\leq i<j\leq s)=\prod_{i<j}\frac{1}{N_k^{n_{ij}}}\sum_{x_1,...,x_{n_{ij}}=1}^{N_k}\sigma_{x_1}^{i}\sigma_{x_1}^{j}\ldots \sigma_{x_{n_{ij}}}^{i}\sigma_{x_{n_{ij}}}^{j}
\]
\begin{equation}
\label{eqn:F exp}
= \frac{1}{N_k^T} \sum_{x_1^{1,2}, \ldots, x_{n_{1,2}}^{1,2} =1}^{N_k} \ldots \sum_{x_1^{s-1,s}, \ldots, x_{n_{s-1,s}}^{s-1,s} =1}^{N_k} \prod_{i<j} \sigma_{x_1^{i,j}}^i\sigma_{x_1^{i,j}}^j \ldots \sigma_{x_{n_{ij}}^{i,j}}^{i}\sigma_{x_{n_{ij}}^{i,j}}^j\ ,
\end{equation}
where $T=\sum_{i<j} n_{ij}$ and the iterated sums start with the pair (1,2) and include all pairs $(i,j)$ for $i<j$. We use the convention that the empty product equals 1 (in products above for which $n_{ij}=0$). We may use the invariance under permutation of spins of the measure $\int \nu(dJ) G_{N_k,\beta,J} (d\sigma)$ to simplify this expression after integration over $G_{N_k,\beta,J}^{\times s}\ \nu(dJ)$. We make a fixed choice (i.e., independent of $k$) of distinct vertices $\{x_1^{i,j}, \ldots, x_{n_{ij}}^{i,j}~:~i<j\}$ by assigning $x_1^{1,2}, \ldots, x_{n_{1,2}}^{1,2}$ to the set $\{1, \ldots, n_{1,2}\}$, assigning $x_1^{1,3}, \ldots, x_{n_{1,3}}^{1,3}$ to the set $\{n_{1,2}+1, \ldots, n_{1,2}+n_{1,3}\}$, and so on, until we assign $x_1^{s-1,s}, \ldots, x_{n_{s-1,s}}^{s-1,s}$ to the set $\{T-n_{s-1,s}, \ldots, T\}$. After straightforward combinatorics, \eqref{eqn:F exp} becomes
\begin{equation}
\label{eqn: comb1}
\begin{aligned}
&\int \nu(dJ) G_{N_k,\beta,J}^{\times s}\Big(F(R^{N_k}(\sigma^i,\sigma^j)~:~1\leq i<j\leq s)\Big)= \\
&\hspace{4cm}\int \nu(dJ) G_{N_k,\beta,J}^{\times s}\Big(\prod_{i<j}\sigma_{x_1^{i,j}}^{i}\sigma_{x_1^{i,j}}^j \ldots\sigma_{x_{n_{ij}}^{i,j}}^i\sigma_{x_{n_{ij}}^{i,j}}^j\Big)+\frac{C}{N_k},
\end{aligned}
\end{equation}
for some $C>0$, that depends on $F$ but not $k$.
This holds because the dominant term in \eqref{eqn:F exp} after integration comes when all elements of $\{x_1^{i,j}, \ldots, x_{n_{i,j}}^{i,j}~:~i<j\}$ are distinct.
The same development applies to $F(R^N(\sigma^i,\sigma^j)~:~1\leq i<j\leq s)\Big)$ under the measure $\int \kappa_J(d\Gamma) \Gamma^{\times s}\ \nu(dJ)$, since it is also permutation invariant, and one gets, for the same fixed set of distinct vertices $\{x_1^{i,j}, \ldots, x_{n_{ij}}^{i,j}~:~i<j\}$,
\begin{equation}
\label{eqn: comb2}
\begin{aligned}
&\int \nu(dJ)\int \kappa_J(d\Gamma) \Gamma^{\times s}\Big(F(R^{N}(\sigma^i,\sigma^j)~:~1\leq i<j\leq s)\Big)= \\
&\hspace{4cm} \int \nu(dJ)\int \kappa_J(d\Gamma) \Gamma^{\times s}\ \Big(\prod_{i<j}\sigma_{x_1^{i,j}}^{i}\sigma_{x_1^{i,j}}^j\ldots \sigma_{x_{n_{ij}^{i,j}}}^i\sigma_{x_{n_{ij}}^{i,j}}^j\Big)+\frac{C}{N}.
\end{aligned}
\end{equation}
Equation \eqref{eqn:to show} follows from the convergence to the metastate and by taking the limits $k\to\infty$ and $N\to\infty$ in \eqref{eqn: comb1} and \eqref{eqn: comb2} (note here that $s$, and therefore the $n_{ij}$'s, are fixed).
\end{proof}

As discussed in Section \ref{subsec: stability}, the ROSt $Q$ of the SK model is stochastically stable. However, we argue here that it cannot satisfy the property of local stability.
This is simply because the factor $1/\sqrt{N}$ appearing in \eqref{eqn: H SK} will make any local change of the couplings vanish in the limit $N\to\infty$. 
Therefore no non-trivial identity similar to \eqref{eqn: covariance prop} can be recovered.
This can be made precise as follows. Let $J'$ be a realization of the couplings that equals $0$ except on a finite set of edges between the first $M$ spins. Plainly, for any fixed $M$ and any $\epsilon>0$, $N$ can be taken large enough so that
$$
\frac{1-\epsilon}{1+\epsilon} G_{N,\beta,J}(\cdot)\leq G_{N,\beta,J+J'}(\cdot)=
\frac{G_{N,\beta,J}(e^{-\beta H_M'(\sigma)}\cdot)}{G_{N,\beta,J}(e^{-\beta H_M'(\sigma)})}\leq \frac{1+\epsilon}{1-\epsilon} {G_{N,\beta,J}(\cdot)}\ ,
$$
where $H_{M}'=-\sum_{1\leq x< y\leq M}\frac{J'_{xy}}{\sqrt{N}}\sigma_x\sigma_y$.
Since $\epsilon$ is arbitrary, an argument identical to the one leading to \eqref{eqn: covariance prop} yields
for any measurable function $F$ of $\{\sigma^i\}$ the identity
\begin{equation}
\label{eqn: covariance prop2}
\int\kappa_{J+J'}(d\Gamma)\int F(\{\sigma^i\}) \prod_{i\in\N}d\Gamma =
\int\kappa_{J}(d\Gamma)\int F(\{\sigma^i\}) \prod_{i\in\N}d\Gamma \ .
\end{equation}
It is interesting to note that, although no non-trivial transformation of the ROSt can be recovered from \eqref{eqn: covariance prop2}, 
the identity
does imply that the metastate of the SK model is invariant under local changes of couplings (since it applies to any
observable $F$ on replicas). In the EA model, the identity holds for translation-invariant observables.
It is for example not hard to see that it is not true for $F(\{\sigma^i\})=\sigma_x^1 \sigma_y^1$, with $x$ and $y$ nearest-neighbor sites, in the EA model.

\section{Proof of Theorem \ref{thm: stochstability}}
\label{sec: proof}

The identity of Theorem \ref{thm: ls} remains true if a measure on the couplings $J'$ is present, say $\nu'$:
\begin{equation}
\label{eqn: W id}
\int \nu'(dJ')\  \kappa_{J}\left[\frac{\Gamma^{\times s}\big(e^{\lambda_W \sum_{i=1}^s H_W'(\sigma^i)}F(\ssigma)\big)}{\Gamma^{\times s}(e^{\lambda_W \sum_{i=1}^s H'_W(\sigma^i)})}\right]=\kappa_{J}\left[\Gamma^{\times s}(F(\ssigma))\right]\ ,
\end{equation}
for any parameter $\lambda_W>0$ as long as $\nu'$ is absolutely continuous with respect to $\nu$. 
Take $\nu'$ to be the product measure of standard Gaussian measures.
The right-hand side is equal to $\E [\mu_J^{\times s}(F(\vec{v}))]$, by definition of the ROSt $Q_J$ and its sampling measure $\mu_J$ in  \eqref{eqn:df Q}. 
In view of Definition \ref{df: SS}, the proof reduces to establish, for an appropriate sequence $(\lambda_W~:~W\subset \Z^d)$ and for a given $\lambda>0$, the limit
\begin{equation}
\label{eqn: mu tilde}
\begin{aligned}
&\lim_{W\to\Z^d}
\int \nu'(dJ')\  \kappa_{J}\left[\frac{\Gamma^{\times s}\big(e^{\lambda_W \sum_{i=1}^s H_W'(\sigma^i)}F(\ssigma)\big)}{\Gamma^{\times s}(e^{\lambda_W \sum_{i=1}^s H'_W(\sigma^i)})}\right]\\
&\hspace{4cm}=E_l \E \left[\frac{\mu_J^{\times s}\Big(F(\vec{v})e^{\lambda l(v^1)-\frac{\lambda^2}{2}\|v^1\|^2} ... e^{\lambda l(v^s)-\frac{\lambda^2}{2}\|v^s\|^2}\Big)}{\mu_J^{\times s}\Big(e^{\lambda l(v^1)-\frac{\lambda^2}{2}\|v^1\|^2} ... e^{\lambda l(v^s)-\frac{\lambda^2}{2}\|v^s\|^2}\Big)}\right],
\end{aligned}
\end{equation}
where $l=(l(v),v\in\hilbert )$ is a Gaussian field independent of $\mu_J$, with $E_l l(v)l(v')=v\cdot v'$, and $E_l$ denotes the expectation over this field. 

A short argument motivates the limit \eqref{eqn: mu tilde}.
The identity \eqref{eqn: W id} is true for any $\lambda_W>0$.
So for $\lambda\in\R$, choose a sequence $(\lambda_W~:~W\subset \Z^d)$ such that
$\lambda_W \sqrt{|W^*|}\to \lambda$ as $W\to \Z^d$.
For this sequence,
\begin{equation}
\label{eqn: R limit}
\lim_{W\to\Z^d}\int\nu'(dJ')\ \lambda_W H'_W(\sigma)\ \lambda_W  H'_W(\sigma')=\lambda^2 R(\sigma,\sigma')\ .
\end{equation}
In particular, the variables $(\lambda_WH'_W(\sigma)~:~\sigma\in\{-1,+1\}^{W})$ under the measure $\nu'$ converge in law to Gaussian variables
$(\lambda l(\sigma)~:~\sigma\in\{-1,+1\}^{\Z^d})$ of covariance $R(\sigma,\sigma')$ wherever this overlap is well-defined. 
This is encouraging, but one has to be cautious to obtain a complete proof since the overlaps $R(\sigma,\sigma')$ 
are only defined $\Gamma\times\Gamma\ \kappa_J(d\Gamma)$ almost surely, and since the convergence to the field  $l$ is not stronger than convergence in law. 

The proof goes along the lines of the proof of the continuity of the mapping that sends $\mu$
to the modified sampling measure \eqref{eqn: mu}. 
We refer to \cite{lp_sourav} for the details on some estimates.
The idea is roughly to linearize the dependence of $\mu_J$ and $\Gamma$ in the denominator of  \eqref{eqn: mu tilde} using the weak law
of large numbers. An application of Fubini's theorem and the convergence \eqref{eqn: R limit} concludes the argument.

Consider $n$ copies $\ssigma^r$, $r=1,\ldots ,n$, of $s$ replicas each: $\ssigma=(\sigma^1,\ldots ,\sigma^s)$.
Define $G_W(\ssigma,J',\lambda_W)$ as $\prod_{i=1}^s \exp\left(\lambda_W H_W'(\sigma^i)\right)$ and
$$
F_W(\{\ssigma^r\}_{r=1}^n,J',\lambda_W \sqrt{|W^*|})=\frac{\frac{1}{n}\sum_{r=1}^n F(\ssigma^r)G_W(\ssigma^r,J',\lambda_W)}{\frac{1}{n}\sum_{r=1}^n G_W(\ssigma^r,J',\lambda_W)}\ ,
$$
where $F(\ssigma)=F(R(\sigma^i,\sigma^j)~:~ 1\leq i<j\leq s)$.
Using standard manipulations in the spirit of the weak law of large numbers \cite{lp_sourav}, it is not hard to show that there exists a $C>0$ that only depends on $F$ (in particular {\it not} on $W$) such that
\begin{equation}
\label{eqn: weak1}
\begin{aligned}
&\Big|\int \nu'(dJ') \kappa_{J}[\Gamma^{\times ns} \big(F_W(\{\ssigma^r\}_{r=1}^n,J',\lambda_W \sqrt{|W^*|})\big)]\\
&\hspace{4cm} -\int \nu'(dJ') \kappa_J\left[\frac{\Gamma^{\times s}(F(\ssigma)G_W(\ssigma,J',\lambda_W))}{\Gamma^{\times s}(G_W(\ssigma,J',\lambda_W))} \right]\Big|\leq \frac{C}{\sqrt{n}}\ .
\end{aligned}
\end{equation}
A similar estimate holds for the ROSt with sampling measure $\mu_J$. 
Let $E_z$ denote expectation over a standard Gaussian variable and $\vec{z}=(z^1,\ldots ,z^s)$. 
Write $G(\vec{v},\vec{z},l,\lambda)=\prod_{i=1}^s \exp(\lambda l(v^i)+\lambda z^i \sqrt{1-\|v^i\|^2})$, and for $n$ copies of $(\vec{v},\vec{z})$,
$$
F_{\Z^d}(\{\vec{v}^r,\vec{z}^r\}_{r=1}^n,l,\lambda)=\frac{\frac{1}{n}\sum_{r=1}^n F(\vec{v}^r)G(\vec{v}^r,\vec{z}^r,l,\lambda)}
{\frac{1}{n}\sum_{r=1}^n G(\vec{v}^r,\vec{z}^r,l,\lambda)}\ ,
$$
where $F(\vec{v})=F(v^i\cdot v^j~:~ 1\leq i<j\leq s)$.
Note that $E_z^{\times s} G(\vec{v},\vec{z},l,\lambda)=\prod_{i=1}^s \exp(\lambda l(v^i)+\frac{\lambda^2}{2}(1-\|v^i\|^2)$. In particular,
\begin{equation}
E_l \E \left[\frac{(\mu_J\times E_z)^{\times s}\big(F(\vec{v})G(\vec{v},\vec{z},l,\lambda)\big)}{(\mu_J\times E_z)^{\times s}\big(G(\vec{v},\vec{z},l,\lambda)\big)}\right]
\end{equation} 
equals the right-hand side of \eqref{eqn: mu tilde}.
The same manipulations as for \eqref{eqn: weak1} yield
\begin{equation}
\label{eqn: weak2}
\begin{aligned}
&\Big|E_l \E[(\mu_J\times E_z)^{\times ns}\big(F_{\Z^d}(\{\vec{v}^r,\vec{z}^r\}_{r=1}^n,l,\lambda)\big)]\\
&\hspace{4cm}-E_l \E \left[\frac{(\mu_J\times E_z)^{\times s}\big(F(\vec{v})G(\vec{v},\vec{z},l,\lambda)\big)}{(\mu_J\times E_z)^{\times s}\big(G(\vec{v},\vec{z},l,\lambda\big)\big)}\right]\Big|\leq \frac{C}{\sqrt{n}}\ .
\end{aligned}
\end{equation}
The important feature of the estimate \eqref{eqn: weak1} is that it holds uniformly in $W$. 
Therefore \eqref{eqn: mu tilde} can be proved by a standard $\epsilon$-argument by picking $n$ large enough once it is established that
\begin{equation}
\label{eqn: proof1}
\begin{aligned}
&\lim_{W\to\Z^d} \int \nu'(dJ') \kappa_{J}[\Gamma^{\times ns} \big(F_W(\{\ssigma^r\}_{r=1}^n,J',\lambda_W \sqrt{|W^*|})\big)]\\
&\hspace{5cm}=E_l \E[(\mu_J\times E_z)^{\times ns}\big(F_{\Z^d}(\{\vec{v}^r,\vec{z}^r\}_{r=1}^n,l,\lambda)\big)]\ .
\end{aligned}
\end{equation}

Observe that the function $F_W$ after integration over $\nu'$ is one that depends on the $R$-overlaps of $ns$ replicas (through $F(\ssigma)$) as well as the 
$R^W$-overlaps, being the covariances of the field $H_W'$. Accordingly, define the function $\overline{F}$
 \begin{equation}
\label{eqn: F}
\overline{F}(\{R^W(\sigma^{ri},\sigma^{r'j})\}; \{R(\sigma^{ri},\sigma^{r'j})\} , \lambda_W\sqrt{|W^*|}):=\int \nu'(dJ')F_W(\{\ssigma^r\}_{r=1}^s,J',\lambda_W\sqrt{|W^*|})\ ,
\end{equation}
where $r,r'$ range from $1$ to $n$ and $i,j$ from $1$ to $s$.
It is understood that the first set of coordinates of $\overline{F}$ contain the dependence on the covariances of the field $H'_W$
and the second set, the dependence on the $R$-overlaps within $F(\ssigma)$. 
Moreover, since $R(\sigma,\sigma)=R^W(\sigma,\sigma)=1$ for every $\sigma$, the dependence 
of $\overline{F}$ is only on the overlaps of distinct replicas, i.e. for $r\neq r'$ or $i\neq j$. This will be important.
It is easy to check that $\overline{F}$ is continuous in each overlap coordinate, thus bounded. 
Since $\lim_{W\to\Z^d}R^W(\sigma,\sigma')=R(\sigma,\sigma')$ for $\Gamma\times\Gamma\ \kappa_J(d\Gamma)$ almost all $\sigma$ and $\sigma'$
and $\lambda_W\sqrt{|W^*|}\to\lambda$ by definition, the dominated convergence theorem implies
\begin{equation}
\label{eqn: proof2}
\begin{aligned}
&\lim_{W\to \Z^d} \kappa_{J}[\Gamma^{\times ns} \big(\overline{F}(\{R^W(\sigma^{ri},\sigma^{r'j})\}; \{R(\sigma^{ri},\sigma^{r'j})\} ,\lambda_W\sqrt{|W^*|})]\\
&\hspace{4cm}=  \kappa_{J}[\Gamma^{\times ns} \big(\overline{F}(\{R(\sigma^{ri},\sigma^{r'j})\}; \{R(\sigma^{ri},\sigma^{r'j})\},\lambda )]\ .
\end{aligned}
\end{equation}
As it appears on the right-hand side, the function $\overline{F}$ is a continuous function on $ns$ replicas in the sense of \eqref{eqn: fct rep},
since it depends only on the $R$-overlaps of distinct replicas. By the definition of the ROSt in \eqref{eqn:df Q},
\begin{equation}
\label{eqn: proof3}
 \kappa_{J}[\Gamma^{\times ns} \big(\overline{F}(\{R(\sigma^{ri},\sigma^{r'j})\}; \{R(\sigma^{ri},\sigma^{r'j})\},\lambda )]
 =
 \E \mu_J^{\times ns}\big(\overline{F}(\{v^{ri}\cdot v^{r'j}\}; \{v^{ri}\cdot v^{r'j}\},\lambda )\big)\ .
\end{equation}
It is readily checked that the expectation $E_lE_z^{\times ns}[F_{\Z^d}(\{\vec{v}^r,\vec{z}^r\}_{r=1}^n,l,\lambda)]$
depends on $\{v^{ri}\cdot v^{r'j'}\}$ through $F(\vec{v})$ as well as through the covariances of the $ns$ Gaussian variables 
$$l(v^{ri})+z^{ri} \sqrt{1-\|v^{ri}\|^2})$$
that have variance $1$ and covariance $v^{ri}\cdot v^{r'j'}$ for $r\neq r'$ or $i\neq j$. In fact, the dependence is exactly as for $\overline{F}$. 
Successive applications of Fubini gives
\begin{equation}
\label{eqn: proof4}
 \E \mu_J^{\times ns}\big(\overline{F}(\{v^{ri},v^{r'j}\}; \{v^{ri},v^{r'j}\},\lambda )=E_l \E (\mu_J\times E_z)^{\times ns}[F_{\Z^d}(\{\vec{v}^r,\vec{z}^r\}_{r=1}^n,l,\lambda)]\ .
\end{equation}
Equations \eqref{eqn: F} $-$ \eqref{eqn: proof4} imply \eqref{eqn: proof1}, thereby concluding the proof of the theorem.

\appendix
\section{Appendix}
\label{appendix}

We present for completeness a more standard approach to prove stochastic stability (see e.g. \cite{CG,panchenko_GG2, lp_sourav}).
A slight modification of the standard proof is needed since the ROSt in \eqref{eqn:df Q} is defined as a limit of the local overlaps $R^W$, 
whereas the proof of \cite{CG,panchenko_GG2, lp_sourav} works for ROSt's defined as in \eqref{eqn:df Q^Lambda} for the overlaps of the entire system. 
Precisely, in the proof that follows, the parameter $\beta$ is varied infinitesimally within a window $W$ keeping the parameter outside fixed, whereas
in the standard proofs, it is varied throughout the system.
The argument proves stochastic stability of the ROSt at any $\beta$ for which the quenched free energy
$$
f(\beta):=\lim_{\Lambda\to \Z^d}\int \nu(dJ) \frac{1}{|\Lambda |}\log \sum_{\sigma} e^{\beta H_{\Lambda,J}(\sigma)}
$$
is differentiable. It is well-known that the infinite-volume limit
exists whenever $\Lambda\to\Z^d$ in the sense of Van Hove \cite{limit}, i.e., $\frac{|\partial \Lambda|}{|\Lambda|}\to 0$. 

\begin{thm}
\label{thm:SS}
Consider the EA model on $\Z^d$ with Gaussian couplings with distribution $\nu$.
Let $\beta>0$ be a point of differentiability of the free energy $f$ of the system.
Then the ROSt $Q_J$ constructed from a metastate $\kappa_J$ as in \eqref{eqn:df Q} at temperature $\beta$ is stochastically stable
for $\nu$-almost all $J$.
\end{thm}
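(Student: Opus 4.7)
The plan is to adapt the standard proof of stochastic stability via differentiability of the free energy (as in \cite{AC, CG, panchenko_GG2, lp_sourav}) to the metastate ROSt setting. The difference from the standard proof is that the latter perturbs the Hamiltonian globally, whereas here — since the overlap $R$ defining $Q_J$ is the thermodynamic limit of the local overlaps $R^W$ — the perturbation must be localized in a window $W$, with $W\to\Z^d$ taken at the end. The key ingredient compensating for the fact that a local perturbation has no direct effect on the infinite-volume free energy is translation invariance.

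Fix $\lambda>0$ and a finite $W\subset\Z^d$. Introduce iid standard Gaussian couplings $J'=(J'_{xy})_{\{x,y\}\in W^*}$ with distribution $\nu'$, independent of $J$, and with $\lambda_W = \lambda/\sqrt{|W^*|}$ consider the perturbed Hamiltonian
\[
\widetilde H_{\Lambda,J,J'}(\sigma) = H_{\Lambda,J}(\sigma) + \lambda_W H'_W(\sigma), \qquad H'_W(\sigma) = \sum_{\{x,y\}\in W^*}J'_{xy}\sigma_x\sigma_y,
\]
with partition function $\widetilde Z^{W,\lambda}_{\Lambda,J,J'}(\beta)$ and Gibbs expectation $\langle\cdot\rangle_\lambda$. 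The scaling $\lambda_W=\lambda/\sqrt{|W^*|}$ is forced by
\[
\int\nu'(dJ')\,\lambda_W H'_W(\sigma)\,\lambda_W H'_W(\sigma') \;=\; \lambda^2 R^W(\sigma,\sigma') \;\longrightarrow\; \lambda^2 R(\sigma,\sigma'),
\]
so that $\lambda_W H'_W$ converges in law to the Gaussian field $\lambda l$ appearing in Definition~\ref{df: SS}.

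The main step is to show that at a $\beta$-differentiability point of $f$, the perturbation does not shift the overlap distribution. Consider
\[
\psi_{\Lambda,W}(\lambda) = \frac{1}{|W^*|}\int\nu(dJ)\,\nu'(dJ')\,\log\frac{\widetilde Z^{W,\lambda}_{\Lambda,J,J'}(\beta)}{Z_{\Lambda,J}(\beta)},
\]
normalized by $|W^*|$ rather than by $|\Lambda|$. Gaussian integration by parts in $J'$ yields $\partial_\lambda\psi_{\Lambda,W}(\lambda) = \lambda\int\nu(dJ)\,\nu'(dJ')\,[1 - \langle R^W(\sigma,\sigma')\rangle_\lambda]$, so the one-replica stochastic-stability identity reduces to showing $\psi_{\Lambda,W}(\lambda)\to \tfrac{\lambda^2}{2}\int\nu(dJ)\,[1 - \langle R^W\rangle_0]$ as $\Lambda\to\Z^d$, uniformly in $W$. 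Using the Gaussian identity $J_{xy}+\lambda_W J'_{xy}\sim \sqrt{1+\lambda_W^2}\,J_{xy}$ (integrating out $J'$ rescales $\beta$ locally inside $W$ by $\sqrt{1+\lambda_W^2}$) together with convexity of $\log Z$ in $\beta$, the quantity $\psi_{\Lambda,W}(\lambda)$ is sandwiched between finite-difference quotients of $f_\Lambda$ at $\beta$. Averaging over all translates $W+a$ of the window inside $\Lambda$, possible by the periodic boundary conditions and the translation invariance of $\nu(dJ)\times\kappa_J$, allows these finite differences to be replaced by the one-sided derivatives of $f$; at a point of differentiability they coincide and pinch the bound to the claimed limit. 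Extension to continuous functions on $s$-replica overlaps follows by perturbing $s$ independent replicas simultaneously and repeating the argument.

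With the $\Lambda$-limit in hand for the metastate $\kappa_J$, take $W\to\Z^d$: Theorem~\ref{thm:local} gives $Q^W_J\to Q_J$, and a law-of-large-numbers / dominated-convergence argument closely parallel to the one carried out in Section~\ref{sec: proof} replaces the Gaussian field $\lambda_W H'_W$ by the field $\lambda l$ of covariance $\lambda^2 R(\sigma,\sigma')$. This yields the stochastic-stability identity of Definition~\ref{df: SS} for $Q_J$, valid for $\nu$-almost every $J$. The main obstacle is the pinching step: a local perturbation in a fixed window has no first-order effect on the infinite-volume free energy, so differentiability of $f$ cannot be applied to $\psi_{\Lambda,W}$ directly; only the $|W^*|$-normalization together with the translation-invariant averaging over window positions extracts the required information from $f'(\beta)$. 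Controlling the boundary corrections of $W$ uniformly in $\Lambda$, and then in the iterated limit $W\to\Z^d$, is the technically delicate point.
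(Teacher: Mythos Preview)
Your overall architecture is right and matches the paper: localize the perturbation in a window $W$, use the Gaussian identity to turn the added $\lambda_W H'_W$ into a rescaling of $\beta$ inside $W$ to $\beta_W(\lambda)=\sqrt{\beta^2+\lambda^2/|W^*|}$, exploit differentiability of $f$, and finish with the $W\to\Z^d$ limit via the argument of Section~\ref{sec: proof}. The gap is in your ``pinching step.''

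You claim that $\psi_{\Lambda,W}(\lambda)$ is sandwiched between finite-difference quotients of $f_\Lambda$, and that averaging over translates $W+a$ of the window inside $\Lambda$ lets you replace these by one-sided derivatives of $f$. This does not work as stated. By translation invariance of $\nu$ and the periodic boundary conditions, $\psi_{\Lambda,W+a}=\psi_{\Lambda,W}$ for every $a$, so averaging over window positions yields no new information. More seriously, perturbing only the edges in $W^*$ changes $\beta$ \emph{inhomogeneously} --- to $\beta_W(\lambda)$ inside $W$ and $\beta$ outside --- so the resulting partition function is not $Z_\Lambda(\beta_W(\lambda))$, and $\psi_{\Lambda,W}$ is not a finite-difference quotient of $f_\Lambda$ in any useful sense. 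A tiling-and-telescoping variant fails too, since the intermediate partition functions (with some windows perturbed and others not) are not related to one another by translation.

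What the paper actually does is introduce a \emph{local} free energy
\[
f_{\Lambda,W}(\beta_W)=\frac{1}{|W|}\int\nu(dJ)\,\log\frac{\sum_{\sigma_W,\sigma_{W^c}}e^{\beta_W H_{W,J}(\sigma_W)}e^{\beta H_{\Lambda,W,J}(\sigma_{W^c})}e^{\beta V_{\partial W,J}(\sigma_W,\sigma_{W^c})}}{\sum_{\sigma_{W^c}}e^{\beta H_{\Lambda,W,J}(\sigma_{W^c})}},
\]
observe that $|W|f''_{\Lambda,W}(\beta_W)$ is exactly the $H_{W,J}$-variance appearing in the derivative bound for $|\langle F_W\rangle_{\beta_W(\lambda)}-\langle F_W\rangle_\beta|$, and then prove $\lim_{W\to\Z^d}\sup_{\Lambda\supset W}|f_{\Lambda,W}(\beta_W)-f(\beta_W)|=0$. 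This last step comes not from translation averaging but from a boundary estimate: applying Jensen's inequality through the Gaussian integration over the boundary couplings in $V_{\partial W,J}$ gives
\[
0\;\leq\; f_{\Lambda,W}(\beta_W)-f_W(\beta_W)\;\leq\; \tfrac{\beta_W^2}{2}\,\frac{|\partial W|}{|W|},
\]
uniformly in $\Lambda$, which vanishes in the van Hove limit $W\to\Z^d$. Convexity then transfers differentiability of $f$ at $\beta$ to $f'_{\Lambda,W}(\beta_W(\lambda))-f'_{\Lambda,W}(\beta)\to 0$, and the Cauchy--Schwarz bound closes the argument. Your $\psi_{\Lambda,W}$ is essentially $f_{\Lambda,W}(\beta_W(\lambda))-f_{\Lambda,W}(\beta)$ up to normalization, so the piece you are missing is precisely this comparison of $f_{\Lambda,W}$ with $f$ via the van Hove boundary control; translation invariance plays no role at that step (it enters only at the very end, via Theorem~\ref{thm: ti}, to pass from the $\nu$-averaged statement to the $\nu$-a.e.\ statement).
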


\begin{proof}
Let $\Lambda\subset \Z^d$ be finite and $W\subset \Lambda$.
The Hamiltonian $H_{\Lambda,J}(\sigma)$ can be split into the Hamiltonian of the interactions strictly contained in $W$ (denoted $H_{W,J}$), 
the Hamiltonian of the ones strictly contained in $W^c$ (denoted $H_{\Lambda,W,J}$) and the interactions of 
the edges with one end point in $W$ and one in $W^c$ (denoted $V_{\partial W,J}(\sigma_W,\sigma_{W^c})$). 
If $\sigma_W$ stands for the spin configuration in $W$ and $\sigma_{W^c}$ for the one in $W^c$, the Hamiltonian becomes:
$$
H_{\Lambda,J}(\sigma)=H_{W,J}(\sigma_W)+H_{\Lambda, W,J}(\sigma_{W^c})+ V_{\partial W,J}(\sigma_W,\sigma_{W^c})\ .
$$
The parameter $\beta$ of the Gibbs measure will be fixed outside $W$ and varied inside. The notation will only keep track of the dependence on $\beta$ inside $W$.
We write $\beta_W$ for its value. The Gibbs measure will be denoted by $G_{\Lambda,\beta_W,J}$.
The variation of the parameter $\beta_W$ will be
$$\beta_W(\lambda):=\sqrt{\beta^2+\lambda^2/|W^*|}\ .$$ 
Since the couplings $J$ are assumed to have Gaussian distributions $\nu$, for any $\lambda>0$
one has the following identity in law, where $J$ and an independent copy $J'$ are distributed under $\nu$:
$$
\beta_W(\lambda) H_{W,J}(\sigma_W)\distrib \beta H_{W,J}(\sigma_W) + \frac{\lambda}{\sqrt{|W^*|}} H_{W,J'}(\sigma_W)\ .
$$
Hence
\begin{equation}
\label{eqn: id gibbs}
G_{\Lambda,\beta_W(\lambda),J}(\cdot)\distrib \frac{G_{\Lambda,\beta,J}\big(e^{\frac{\lambda}{\sqrt{|W^*|}}  H_{W,J'}(\sigma_W)}\ \cdot\big)}
{G_{\Lambda,\beta,J}\big(e^{\frac{\lambda}{\sqrt{|W^*|}}  H_{W,J'}(\sigma_W)}\big)}\ .
\end{equation}

Let $F$ denote a continuous function on $s$ replicas $\ssigma=(\sigma^1,\ldots ,\sigma^s)$. 
There exists $C>0$ such that $|F|\leq C$.
 We write for short
$$
 F_W(\ssigma)=F(R^W(\sigma^i,\sigma^j)~:~ 1\leq i<j\leq s)\ ,
$$
$$
 F(\ssigma)=F(R(\sigma^i,\sigma^j)~:~ 1\leq i<j\leq s)\ ,
$$
and
$$
F(\vec{v})=F(v^i\cdot v^j~:~ 1\leq i<j\leq s)\ ,
$$
where $\vec{v}=(v^1,\ldots , v^s)$ are vectors in a Hilbert space with inner product ``$\cdot$''.
Theorem \ref{thm:meta} and the dominated convergence theorem applied to the $R^W$-overlap give
$$
\lim_{W\to\Z^d} \lim_{\Lambda\to\Z^d} \int \nu(dJ)\ G_{\Lambda,\beta ,J}^{\times s}\big(F_W(\ssigma)\big)
= \int \nu(dJ)\  \kappa_J[\Gamma^{\times s}(F(\ssigma))]\ .
$$
In this limit, and in subsequent ones like it, the volumes $\Lambda$ are taken over a sequence used previously to construct the metastate, and 
$W$ converges to $\Z^d$ in the sense of van Hove.
By the definition of the ROSt $Q_J$ with sampling measure $\mu$ in \eqref{eqn:df Q} as well as Theorem \ref{thm: ti} 
on the independence of the ROSt on the couplings, for $\nu$-almost all $J$
$$
\int \nu(dJ)\  \kappa_J[\Gamma^{\times s}(F(\ssigma))]= \E \Big[\mu^{\times s}(F(\vec{v}))\Big]\ .
$$
According to Definition \ref{df: SS}, stochastic stability of $Q_J$ will hold if two limits are shown. First, 
\begin{equation}
\label{eqn: toshow1}
\begin{aligned}
&\lim_{W\to Z^d}\lim_{\Lambda\to \Z^d}\int \nu(dJ) G_{\Lambda,\beta_W(\lambda),J}^{\times s}(F_W(\ssigma))\\
&\hspace{2.5cm}= E_l   \E \left[\frac{\mu^{\times s}\Big(F(\vec{v})e^{\lambda l(v^1)-\frac{\lambda^2}{2}\|v^1\|^2} ... e^{\lambda l(v^s)-\frac{\lambda^2}{2}\|v^s\|^2}\Big)}{\mu^{\times s}\Big(e^{\lambda l(v^1)-\frac{\lambda^2}{2}\|v^1\|^2} ... e^{\lambda l(v^s)-\frac{\lambda^2}{2}\|v^s\|^2}\Big)}\right] \text{ for $\nu$-a.e. $J$ ,}
\end{aligned}
\end{equation}
where $l=(l(v)~:~v\in\B)$ is a Gaussian field on $\B\subset \hilbert$ independent of $\mu_J$, with $E_l l(v)l(v')=v\cdot v'$, and $E_l$ denotes the expectation over this field.
And second,
\begin{equation}
\label{eqn: toshow2}
\lim_{W\to Z^d}\lim_{\Lambda\to \Z^d}  \Big| \int \nu(dJ) G_{\Lambda,\beta_W(\lambda),J}^{\times s}(F_W(\ssigma)) 
-  \int \nu(dJ)\ G_{\Lambda,\beta ,J}^{\times s}\big(F_W(\ssigma)\big)\Big |=0\ .
\end{equation}
Equation \eqref{eqn: toshow1} follows from identity \eqref{eqn: id gibbs} and the same approximation scheme that proved \eqref{eqn: mu tilde}.
We focus on showing \eqref{eqn: toshow2}.

Straightforward differentiation yields
$$
\begin{aligned}
& \partial_{\beta_W} \int \nu(dJ) \ \G_{\Lambda,\beta_W,J}^{\times s}(F_W(\ssigma))\\
&\hspace{2cm}= s \  \int \nu(dJ)\ \G_{\Lambda, \beta_W,J}^{\times s}\Big[\Big\{H_{W,J}(\sigma_W^1)-G_{\Lambda, \beta_W,J}\big(H_W(\sigma_W^1)\big)\Big\}F_W(\ssigma)\Big]\ .
\end{aligned}
$$
 Hence the absolute value of the derivative is bounded by 
 $$s C \ \int\nu(dJ) \ \G_{\Lambda,\beta_W,J}\left|H_{W,J}(\sigma_W)-\G_{\Lambda,\beta_W,J}\big(H_{W,J}(\sigma_W)\big)\right|\ .$$
 By integration, we have the bound
 \begin{multline*}
 \left|\int\nu(dJ)\ \mathcal{G}_{\Lambda, \beta_W(\lambda), J}^{\times s }\big(F_W(\ssigma)\big)- \int\nu(dJ) \ \G_{\Lambda, \beta, J}^{\times s }\big(F_W(\ssigma)\big)\right|\\
 \leq sC \int_{\beta}^{\beta_W(\lambda)}d\beta'_W \int\nu(dJ)\ \G_{\Lambda, \beta_W',J}\left|H_{W,J}(\sigma_W)-\G_{\Lambda, \beta_W',J}(H_{W,J}(\sigma_W))\right|\ .
 \end{multline*}
The integral term goes to zero in the limit. Indeed, by the Cauchy-Schwarz inequality,
 \begin{multline*}
 \int \nu(dJ)\int_{\beta}^{\beta_W(\lambda)}d\beta'_W\ \mathcal{G}_{\Lambda, \beta'_W}\left|H_W(\sigma)-\mathcal{G}_{\Lambda, \beta'_W}(H_W(\sigma))\right|
 \leq \\
 \sqrt{|W|\big(\beta_W(\lambda)-\beta\big)}\left(\int \nu(dJ)\int_\beta^{\beta_W(\lambda)}d\beta'_W\frac{1}{|W|}\ \mathcal{G}_{\Lambda, \beta'_W,J}\left|H_W(\sigma)-\mathcal{G}_{\Lambda, \beta'_W,J}(H_W(\sigma))\right|^2 \right)^{1/2}\ .
 \end{multline*}
Since $\beta_W(\lambda)-\beta=\frac{\lambda^2}{2\beta |W^*|}+ O(\frac{1}{|W^*|^2})$, 
and $|W^*|/|W|$ is bounded in $\Z^d$, the claim would follow if the term in the parentheses goes to zero as $\Lambda\to\Z^d$
followed by $W\to\Z^d$.
Define 
$$
f_{\Lambda,W}(\beta_W):= \frac{1}{|W|}\int \nu(dJ) \log \frac{\sum_{\sigma_W,\sigma_{W^c}} e^{\beta_W H_{W,J}(\sigma_W)}e^{\beta H_{\Lambda, W,J}(\sigma_{W^c})}e^{\beta V_{\partial W,J}(\sigma_W,\sigma_{W^c})}}{\sum_{\sigma_{W^c}}e^{\beta H_{\Lambda, W,J}(\sigma_{W^c})}}\ .
$$
The second derivative $f''_{\Lambda, W}(\beta_W)$ is exactly the term appearing in the integral of the parentheses. 
In particular, the integral in the parentheses equals
$$
f'_{\Lambda, W}(\beta_W(\lambda))-f'_{\Lambda, W}(\beta)\ .
$$
We claim that 
$$
\lim_{W\to \Z^d} \limsup_{\Lambda\to \Z^d} |f'_{\Lambda, W}(\beta_W(\lambda))-f'_{\Lambda, W}(\beta)|=0
$$
Since $\beta_W(\lambda)\to \beta$ as $W\to\Z^d$ and $\beta$ is a point of differentiability of $f$, this will hold by a standard result of convexity (see, e.g., p.94 in \cite{Tal_book}) provided that for all $\beta_W>0$
\begin{equation}
\label{eqn: claim f}
\lim_{W\to \Z^d} \limsup_{\Lambda\to \Z^d} f_{\Lambda, W}(\beta_W)= f(\beta_W) \ .
\end{equation}
This is expected since the free energy $f$ is typically not sensitive to the boundary conditions imposed by $\Lambda$ on $W$. We make this precise.
Let
$$
\begin{aligned}
f_W(\beta,W)&:= 
 \frac{1}{|W|}\int \nu(dJ) \log \sum_{\sigma_W} e^{\beta_W H_{W,J}(\sigma_W)}\\
 &= \frac{1}{|W|}\int \nu(dJ) \log \frac{\sum_{\sigma_W,\sigma_{W^c}} e^{\beta_W H_{W,J}(\sigma_W)}e^{\beta H_{\Lambda, W,J}(\sigma_{W^c})}}{\sum_{\sigma_{W^c}}e^{\beta H_{\Lambda, W,J}(\sigma_{W^c})}}\ ,
\end{aligned}
$$
where the second equality holds trivially for any choice of $\Lambda\supset W$ and $\sigma_{W^c}$ is a spin configuration in $\Lambda \backslash W$.
Remark that because the couplings are Gaussians
$$\int \nu(dJ) e^{\beta_W V_{\partial W,J}(\sigma_W,\sigma_{W^c})}= e^{\frac{\beta_W^2}{2}\ |\partial W|}\ ,$$
where $|\partial W| $ denotes the number of edges with one endpoint in $W$ and one in $W^c$.
Since the couplings in $V_{\partial W, J}$ are independent of the ones in $W$ and $W^c$, Jensen's inequality applied to the logarithm in $f_{\Lambda,W}$ and through the integration over these couplings gives
\begin{equation}
\label{eqn: estimate f}
0 \leq f_{\Lambda,W}(\beta_W)-f_W(\beta_W)\leq \frac{\beta_W^2}{2}\frac{|\partial W|}{|W|}\ .
\end{equation}
Morover it is clear that
\begin{equation}
\label{eqn: limit f}
f(\beta_W)=\lim_{W\to \Z^d}f_W(\beta_W)\ .
\end{equation}
In view of \eqref{eqn: estimate f} 
and \eqref{eqn: limit f},
$$
\lim_{W\to\Z^d}\sup_{\Lambda\supset W} |f_{\Lambda,W}(\beta_W)- f_W(\beta_W)|=0\ ,
$$
and \eqref{eqn: claim f} follows.

\end{proof}

\section{Overlaps for pure states}\label{sec: appb}

We first recall the definition of $m_\Gamma$ from Section~\ref{subsec: pure states}. Given $\Gamma \in {\cal G}_J$, the set of Gibbs measures for the EA Hamiltonian and coupling configuration $J$, $m_\Gamma$ is the unique Borel measure on ${\cal M}$, the set of Borel measures on $\{-1,+1\}^{\Z^d}$ such that $m_\Gamma(ex({\cal G}_J))=1$ and
\[
\Gamma = \int_{ex({\cal G}_J)} \rho m_\Gamma(d\rho)\ .
\]
Again, we will drop the dependence on $J$ in the notation ${\cal G}_J$ when it is obvious from the context.

In this section we will prove that overlaps between pure states sampled from the measure $m_\Gamma$ for $\Gamma$ in the support of $\kappa_J$ are a.s. constant. Because the arguments are mostly from \cite{Simon}, we will only give the main ideas. For notational convenience, we will prove the statement for spin overlaps, but essentially the same argument holds for edge overlaps. We will do this by showing that the variance of $R$ with respect to the measure $\rho_1\times\rho_2$ (for pure states $\rho_1$ and $\rho_2$) is 0. We first list some properties of pure states and of the Choquet map $\Gamma \mapsto m_\Gamma$. Let $T_\infty$ be the tail field of the space of spin configurations. 
\begin{lem}\label{lem: simon}
Let $\rho\in ex({\cal G})$.
\begin{enumerate}
\item For each $A \in T_\infty$,
\[
\rho(A) \in \{0,1\}\ .
\]
\item For each fixed $x\in \Z^d$,
\[
\lim_{|y-x|\to \infty} |\rho(\sigma_x\sigma_y)-\rho(\sigma_x)\rho(\sigma_y)| = 0\ .
\]
\end{enumerate}
\end{lem}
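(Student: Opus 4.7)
The plan is to treat the two claims separately: part (1) is the classical statement that extremal Gibbs measures have trivial tail $\sigma$-algebra, and part (2) is the standard asymptotic decoupling consequence of tail triviality, obtained via reverse martingale convergence.

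For (1) I would argue by contradiction. Suppose $A \in T_\infty$ with $p := \rho(A) \in (0,1)$, and define the two conditional probability measures $\rho_A(\cdot) = \rho(\cdot \cap A)/p$ and $\rho_{A^c}(\cdot) = \rho(\cdot \cap A^c)/(1-p)$. Then $\rho = p\rho_A + (1-p)\rho_{A^c}$ is a non-trivial convex decomposition of $\rho$. The claim reduces to showing that both $\rho_A$ and $\rho_{A^c}$ satisfy the DLR equations for the EA Hamiltonian with couplings $J$, since by extremality of $\rho$ in $\G_J$ this would force $\rho_A = \rho_{A^c} = \rho$, i.e. $\rho(A) \in \{0,1\}$. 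To verify DLR for $\rho_A$ I would observe that for any finite $\Lambda$, the tail event $A$ lies in $\F_{\Lambda^c}$ (up to $\rho$-null sets), so that conditioning on $\F_{\Lambda^c}$ and on $A$ commute: for every event $B$ depending only on spins in $\Lambda$,
\[
\rho_A(B \mid \F_{\Lambda^c}) = \rho(B \mid \F_{\Lambda^c}) \quad \text{on } A,
\]
and the right-hand side is precisely the finite-volume Gibbs specification at the given boundary configuration. The same reasoning applies to $\rho_{A^c}$, which yields the contradiction.

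For (2), fix $x \in \Z^d$ and let $\F_R = \sigma(\sigma_z : |z - x| \geq R)$. The family $(\F_R)_{R > 0}$ is decreasing in $R$ with $\bigcap_{R} \F_R = T_\infty$. By part (1) the tail $\sigma$-algebra is $\rho$-trivial, so the reverse martingale convergence theorem applied to the bounded function $\sigma_x$ gives
\[
\E_\rho[\sigma_x \mid \F_R] \longrightarrow \E_\rho[\sigma_x \mid T_\infty] = \rho(\sigma_x) \quad \text{in } L^1(\rho) \text{ as } R \to \infty.
\]
For $y$ with $|y - x| \geq R$ the spin $\sigma_y$ is $\F_R$-measurable, and the tower property gives
\[
\rho(\sigma_x \sigma_y) - \rho(\sigma_x)\rho(\sigma_y) = \rho\bigl(\sigma_y\,(\E_\rho[\sigma_x \mid \F_R] - \rho(\sigma_x))\bigr),
\]
whose absolute value is at most $\|\E_\rho[\sigma_x \mid \F_R] - \rho(\sigma_x)\|_{L^1(\rho)}$ since $|\sigma_y| = 1$. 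The right-hand side is independent of $y$ and tends to $0$ as $R \to \infty$, which proves the statement.

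The only real subtlety, and the step I would have to write carefully, is the DLR-preservation in (1): one must check that conditioning on a tail event does not disturb the finite-volume Gibbs specification. This is exactly where the hypothesis $A \in T_\infty \subset \F_{\Lambda^c}$ is used, decoupling the conditioning on $A$ from the conditional distribution of $\sigma_\Lambda$ given $\sigma_{\Lambda^c}$. Everything else is standard measure theory, which is why the sketch is brief and we refer to \cite{Simon} for the full details.
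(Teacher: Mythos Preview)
Your proposal is correct and matches the paper's approach: the paper's own proof consists solely of citations to Simon's book (Theorems~III.2.5 and~III.1.6), and what you have sketched is precisely the standard argument underlying those theorems---tail triviality via the conditioning-on-a-tail-event decomposition, and asymptotic decorrelation via backward martingale convergence. You have supplied the details the paper defers to the reference, and nothing more needs to be said.
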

\begin{proof}
The first part of the lemma is proved in Theorem~III.2.5 of \cite{Simon} and the second part in Theorem III.1.6.
\end{proof}

For the next lemma, consider the norm on the set of Gibbs measures that generates the {\it strong} topology:
\[
\|\Gamma\| = \sup_{\|f\|_\infty=1} |\Gamma(f)|\ ,
\]
where the supremum is over continuous functions $f$. For a measure $m$ on ${\cal G}$, define $\|m\|$ similarly, but with continuous functions $f$ on ${\cal G}$.
\begin{lem}\label{lem: simon2}
Let $\Gamma,\Gamma' \in {\cal G}$.
\begin{enumerate}
\item If $\Gamma$ is absolutely continuous with respect to $\Gamma'$ then $\frac{d\Gamma}{d\Gamma'}$ is $T_\infty$-measurable, $m_\Gamma$ is absolutely continuous with respect to $m_{\Gamma'}$, and for each $\rho \in ex({\cal G})$,
\[
\frac{dm_\Gamma}{dm_{\Gamma'}}(\rho) = \rho \left( \frac{d\Gamma}{d\Gamma'} \right)\ .
\]
\item The following equation holds:
\[
\|\Gamma-\Gamma'\| = \|m_\Gamma - m_{\Gamma'}\|\ .
\]
\end{enumerate}
\end{lem}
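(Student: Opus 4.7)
The plan is to prove Part~1 first, and then leverage it for Part~2. The core of Part~1 is establishing that $h := \frac{d\Gamma}{d\Gamma'}$ is $T_\infty$-measurable. I would deduce this from the DLR equations: since both $\Gamma$ and $\Gamma'$ are Gibbs for the same Hamiltonian, their conditional distributions on any finite box $\Lambda$ given $\mathcal{F}_{\Lambda^c}$ coincide with the finite-volume Gibbs kernel $\gamma_\Lambda$. For any local $f$ supported on $\Lambda$ and any bounded $\mathcal{F}_{\Lambda^c}$-measurable $g$, DLR applied to $\Gamma$ and then to $\Gamma'$ gives
$$
\int fgh\,d\Gamma' = \int fg\,d\Gamma = \int g\,\gamma_\Lambda f\,d\Gamma = \int g\,\gamma_\Lambda f\cdot h\,d\Gamma' = \int g\,\gamma_\Lambda f\cdot h_\Lambda\,d\Gamma' = \int fg\, h_\Lambda\,d\Gamma',
$$
where $h_\Lambda = \mathbb{E}_{\Gamma'}[h\mid \mathcal{F}_{\Lambda^c}]$ and in the last step I used DLR on $\Gamma'$ with the $\mathcal{F}_{\Lambda^c}$-measurable factor $gh_\Lambda$. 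Hence $h = h_\Lambda$ $\Gamma'$-a.s. for every finite $\Lambda$, so $h$ is measurable with respect to $\bigcap_\Lambda \mathcal{F}_{\Lambda^c} = T_\infty$.

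With tail-measurability in hand, I introduce the candidate
$$
\widetilde m(d\rho) = \rho(h)\, m_{\Gamma'}(d\rho),
$$
which is a probability measure on $\mathcal{M}$ supported on $ex(\mathcal{G})$ because $m_{\Gamma'}$ is. The key identity, valid whenever $h$ is $T_\infty$-measurable and $\rho$ is extremal, is $\rho(fh)=\rho(f)\rho(h)$: this holds because by part~1 of Lemma~\ref{lem: simon} the tail-measurable function $h$ is $\rho$-a.s.\ constant. Integrating against $m_{\Gamma'}$ and invoking its Choquet property yields
$$
\int \rho(f)\,\widetilde m(d\rho) = \int \rho(fh)\, m_{\Gamma'}(d\rho) = \Gamma'(fh) = \Gamma(f)
$$
for every continuous $f$. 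By the uniqueness of the Choquet--Meyer decomposition asserted in Theorem~\ref{thm: decomposition}, $\widetilde m = m_\Gamma$, which delivers both $m_\Gamma \ll m_{\Gamma'}$ and the claimed density formula.

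For Part~2, I take the reference measure $\Gamma'' = \tfrac{1}{2}(\Gamma+\Gamma')$, so that $\Gamma,\Gamma' \ll \Gamma''$ with tail-measurable Radon--Nikodym derivatives $h = \frac{d\Gamma}{d\Gamma''}$ and $h' = \frac{d\Gamma'}{d\Gamma''}$ by Part~1. The total variation identity gives $\|\Gamma - \Gamma'\| = \Gamma''(|h-h'|)$. Applying Part~1 on the metastate side, $\frac{dm_\Gamma}{dm_{\Gamma''}}(\rho)=\rho(h)$ and $\frac{dm_{\Gamma'}}{dm_{\Gamma''}}(\rho)=\rho(h')$; since $h-h'$ is tail-measurable it is $\rho$-a.s.\ constant for each extremal $\rho$, so $|\rho(h)-\rho(h')| = \rho(|h-h'|)$. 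Integrating and unfolding the Choquet decomposition of $\Gamma''$ gives
$$
\|m_\Gamma - m_{\Gamma'}\| = \int |\rho(h)-\rho(h')|\, m_{\Gamma''}(d\rho) = \int \rho(|h-h'|)\, m_{\Gamma''}(d\rho) = \Gamma''(|h-h'|) = \|\Gamma-\Gamma'\|.
$$

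The main obstacle is the tail-measurability of $h$ in Part~1; once secured, the remainder is driven by the slogan that tail-measurable functions are constants on pure states, which allows absolute values and products to pass freely in and out of $\rho(\cdot)$, together with routine appeals to Choquet uniqueness and the DLR structure.
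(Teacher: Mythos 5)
Your proof is correct; the paper does not actually prove this lemma but simply cites Theorems~III.5.1 and III.5.2 of Simon's book, and your argument is in essence a self-contained reconstruction of those proofs (tail-measurability of $\frac{d\Gamma}{d\Gamma'}$ from the shared DLR specification plus backward-martingale convergence, the density formula from Choquet--Meyer uniqueness together with part~1 of Lemma~\ref{lem: simon}, and the norm identity via the dominating measure $\tfrac12(\Gamma+\Gamma')$). The only point worth flagging is that $h$ is $T_\infty$-measurable only up to a $\Gamma'$-null set, so one should pass to a genuinely tail-measurable version before asserting it is $\rho$-a.s.\ constant; since $m_{\Gamma'}$-almost every $\rho$ assigns measure zero to any fixed $\Gamma'$-null set, this is harmless, as are the routine monotone-class extensions of the barycenter formula from continuous to integrable integrands.
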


\begin{proof}
The first statement is \cite[Theorem~III.5.1]{Simon} and the second is \cite[Theorem~III.5.2]{Simon}. 
\end{proof}

Next, we notice that for $\rho_1$ and $\rho_2$ in the support of $m_\Gamma$ (for $\Gamma$ in the support of $\kappa_J$),
\begin{eqnarray}\label{eq: overlapsum}
Var_{\rho_1\times\rho_2}\left(\lim_{\Lambda \to \Z^d} \frac{1}{|\Lambda|} \sum_{x \in \Lambda} \sigma_x\sigma'_x\right) &=& \lim_{\Lambda \to \Z^d} \frac{1}{|\Lambda|^2} \sum_{x,y \in \Lambda} \rho_1(\sigma_x\sigma_y)\rho_2(\sigma_x\sigma_y) - \rho_1(\sigma_x)\rho_1(\sigma_y)\rho_2(\sigma_x)\rho_2(\sigma_y) \nonumber \\
&=& \lim_{\Lambda \to \Z^d} \frac{1}{|\Lambda|^2} \sum_{x,y \in \Lambda} \rho_1(\sigma_x\sigma_y)\left[ \rho_2(\sigma_x\sigma_y)-\rho_2(\sigma_x)\rho_2(\sigma_y) \right] \nonumber \\
&+&  \lim_{\Lambda \to \Z^d} \frac{1}{|\Lambda|^2} \sum_{x,y \in \Lambda} \rho_2(\sigma_x)\rho_2(\sigma_y)\left[ \rho_1(\sigma_x\sigma_y)-\rho_1(\sigma_x)\rho_1(\sigma_y) \right]\ .
\end{eqnarray}
We would like to use part 2 of Lemma~\ref{lem: simon} to deduce that this quantity is zero. For this purpose, we introduce the measure $M^*$ on the product space of triples $(J,\{\sigma^i\}_i,(\rho_1,\rho_2))$:
\[
M^*=M\times(m_\Gamma\times m_\Gamma)\ .
\]
Sampling from $M^*$ amounts to sampling a pair $(J,\{\sigma^i\}_i)$ from $M$, noting the value of $\Gamma$, and then sampling two pure states independently from $m_\Gamma$. We must check that this indeed defines a probability measure. For this it suffices to show the following:

\begin{lem}\label{lem: measurable}
For each Borel subset $B$ of ${\cal G}\times {\cal G}$, the map $(J,\{\sigma^i\}_i) \mapsto (m_\Gamma\times m_\Gamma)(B)$ is measurable.
\end{lem}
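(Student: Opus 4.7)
The plan is to reduce the claim to the Borel measurability of $\Gamma \mapsto \int g\, dm_\Gamma$ for bounded continuous $g: \G \to \R$ by two successive applications of the $\pi$--$\lambda$ theorem, and then to deduce this from Lemma~\ref{lem: simon2}(2).

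First, observe that $\Gamma$ is itself a Borel-measurable function of $(J, \{\sigma^i\}_i)$, since it arises as the $M$-a.s.\ de Finetti limit $\lim_n \tfrac{1}{n}\sum_{i=1}^n \delta_{\sigma^i}$. Thus it suffices to show that $\Gamma \mapsto (m_\Gamma \times m_\Gamma)(B)$ is Borel measurable on $\G$. The family of Borel $B \subset \G\times \G$ for which this holds is a $\lambda$-system and it contains all rectangles $B_1\times B_2$, since $(m_\Gamma\times m_\Gamma)(B_1\times B_2) = m_\Gamma(B_1)\, m_\Gamma(B_2)$. Rectangles generate the product Borel $\sigma$-algebra, so the $\pi$--$\lambda$ theorem reduces the task to showing $\Gamma \mapsto m_\Gamma(A)$ is Borel measurable for each Borel $A \subset \G$.

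Next, a similar monotone-class argument combined with the metrizability of $\G$ in the weak topology allows each indicator $1_A$ to be approximated pointwise by bounded continuous $g_n: \G\to[0,1]$, reducing measurability of $\Gamma \mapsto m_\Gamma(A)$ to measurability of $\Gamma\mapsto \int g\, dm_\Gamma$ for each bounded continuous $g$.

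The main step is to establish this last measurability. From Lemma~\ref{lem: simon2}(2),
\[
\left|\int g\, dm_\Gamma - \int g\, dm_{\Gamma'}\right| \leq \|g\|_\infty \|m_\Gamma - m_{\Gamma'}\| = \|g\|_\infty \|\Gamma - \Gamma'\|\ ,
\]
so $\Gamma\mapsto \int g\, dm_\Gamma$ is Lipschitz in the total-variation norm on $\G$. Because $\G$ is a metrizable compact simplex, the Choquet--Meyer construction realizes $\Gamma \mapsto m_\Gamma$ as a weak-Borel measurable map into $\M(\G)$ (as in \cite{Simon}, Sec.~III.5), and composition with the continuous evaluation $m\mapsto \int g\, dm$ yields the required measurability. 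The main obstacle I anticipate is precisely this last bridge between total-variation regularity---immediate from the isometry---and the weak-Borel measurability that the metastate framework requires; it is here, and only here, that the simplex structure of $\G$ (i.e., the uniqueness of the barycentric decomposition) is used decisively.
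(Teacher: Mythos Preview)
Your reduction via the $\pi$--$\lambda$ theorem and the Lipschitz bound from Lemma~\ref{lem: simon2}(2) are both correct, and you have correctly identified the one genuine issue: passing from total-variation regularity to weak-Borel measurability. Your proof differs from the paper's in how it is organized and in how this bridge is crossed.

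The paper does not reduce to single factors. It shows directly that $\Gamma \mapsto (m_\Gamma\times m_\Gamma)(B)$ is \emph{strongly} (TV-) continuous for each fixed Borel $B$, using part~(1) of Lemma~\ref{lem: simon2} rather than part~(2): writing $\gamma_n=\tfrac12(\Gamma_n+\Gamma)$, one has $dm_{\Gamma_n}/dm_{\gamma_n}(\rho)=\rho(d\Gamma_n/d\gamma_n)$ and similarly for $\Gamma$, and a short telescoping bound gives
\[
\big|(m_{\Gamma_n}\times m_{\Gamma_n})(B)-(m_\Gamma\times m_\Gamma)(B)\big|
\;\le\;2\int\Big|\tfrac{d\Gamma_n}{d\gamma_n}-\tfrac{d\Gamma}{d\gamma_n}\Big|\,d\gamma_n \to 0.
\]
The bridge you worried about is then dispatched in one line: every strongly open subset of $\G$ is a Borel set in the weak topology. (Reason: on a compact metric space the TV-distance $\|\Gamma-\Gamma_0\|=\sup_n|\Gamma(f_n)-\Gamma_0(f_n)|$ is a countable supremum of weak-continuous functions, hence weak-Borel; so strong balls, and therefore preimages of open sets under any TV-continuous real-valued map, are weak-Borel.) This is precisely the missing ingredient in your argument, and it requires no simplex structure.

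Your final paragraph conflates two distinct routes. If you cite the weak-Borel measurability of the barycentric map $\Gamma\mapsto m_\Gamma$ as a known Choquet--Meyer fact, then your TV-Lipschitz computation is unnecessary: composing with $m\mapsto\int g\,dm$ (weak-continuous) already gives what you want. Conversely, if you want the TV-Lipschitz bound to do the work, you need exactly the paper's observation that strong continuity implies weak-Borel measurability, and then no appeal to abstract Choquet theory is needed. Either route is valid; the paper's is more self-contained and avoids invoking measurability of the Choquet map as a black box.
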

\begin{proof}
Because the map $(J,\{\sigma^i\}_i) \mapsto \Gamma$ is measurable, it suffices to show that 
\begin{equation}\label{eq: gammamap}
\Gamma \mapsto (m_\Gamma\times m_\Gamma)(B)
\end{equation}
is measurable. We show that the map \eqref{eq: gammamap} is strongly continuous. It is elementary to show that each strongly open set in ${\cal G}$ is a Borel set in the weak topology, so this will prove the lemma. Let $(\Gamma_n)$ be a sequence in ${\cal G}$ that converges strongly to $\Gamma$. Define
\[
\gamma_n = (1/2)(\Gamma_n+\Gamma)\ .
\]
It is not difficult to see that the convergence $\Gamma_n \to \Gamma$ is equivalent to the following:
\begin{equation}\label{eq: equiv}
\int \left| \frac{d\Gamma_n}{d\gamma_n} - \frac{d\Gamma}{d\gamma_n} \right| d\gamma_n \to 0\ .
\end{equation}
Using Lemma~\ref{lem: simon2}, one can show that the quantity $|(m_{\Gamma_n}\times m_{\Gamma_n})(B) - (m_\Gamma\times m_\Gamma)(B)|$ is no bigger than
\[
\int_{{\cal G}} \rho_1 \left(\frac{d\Gamma_n}{d\gamma_n} \right) \int_{{\cal G}} \left| \rho_2 \left(\frac{d\Gamma_n}{d\gamma_n} - \frac{d\Gamma}{d\gamma_n} \right) \right| m_{\gamma_n}(d\rho_2)m_{\gamma_n}(d\rho_1) 
\]
\[
+ \int_{{\cal G}} \rho_2 \left(\frac{d\Gamma}{d\gamma_n} \right) \int_{{\cal G}} \left| \rho_1 \left(\frac{d\Gamma_n}{d\gamma_n} - \frac{d\Gamma}{d\gamma_n} \right) \right| m_{\gamma_n}(d\rho_1)m_{\gamma_n}(d\rho_2)\ .
\]
 
By the definition of the pure state decomposition and the fact that the above derivatives are constant a.s. in each pure state, this equals
\[
2 \int \left| \frac{d\Gamma_n}{d\gamma_n}-\frac{d\Gamma}{d\gamma_n} \right| d\gamma_n\ ,
\]
which converges to 0 by \eqref{eq: equiv}. This proves that \eqref{eq: gammamap} is strongly continuous and completes the proof of Lemma~\ref{lem: measurable}.

\end{proof}

We will now show that for $\rho_1\times\rho_2$-almost all pairs $(\sigma^1,\sigma^2)$ (for $M^*$-almost every $(\rho_1,\rho_2)$), \eqref{eq: overlapsum} equals zero. For a lattice vector $a \in \Z^d$, define the translation $T_a$ on a triple in the support of $M^*$ by
\[
T_a(J,\{\sigma^i\}_i,(\rho_1,\rho_2)) = (T_a(J), \{T_a(\sigma^i)\}_i, (T_a \rho_1, T_a \rho_2))\ ,
\]
where for $i=1,2$, the measure $T_a \rho_i$ is defined as in Section~\ref{subsec: TI}. Since the measure $M$ is translation-invariant and since the pure state decomposition of a state $\Gamma$ is unique, the measure $M^*$ is translation-invariant.

The fact that \eqref{eq: overlapsum} equals zero for $(m_\Gamma \times m_\Gamma)$-almost every pair $(\rho_1,\rho_2)$ (for $\nu(dJ)\times \kappa_J(d\Gamma)$-almost all $\Gamma$) is a straightforward consequence of the following statement. With $M^*$-probability one, if $i=1$ or 2, then
\begin{equation}\label{eq: mixing2}
\lim_{\Lambda \to \Z^d} \frac{1}{|\Lambda|^2} \sum_{x,y \in \Lambda} | \rho_i(\sigma_x\sigma_y) - \rho_i(\sigma_x)\rho_i(\sigma_y)| = 0\ .
\end{equation}

To prove this, let $\varepsilon>0$. For each $(J,\{\sigma^i\}_i,\rho_i)$ and $x \in \Z^d$, we define
\[
N_i(x,\varepsilon) = \min\left\{ N~:~ |\rho_i(\sigma_x\sigma_y)-\rho_i(\sigma_x)\rho_i(\sigma_y)| < \varepsilon \mbox{ for all } y \mbox{ with } |y-x| \geq N \right\}\ .
\]
By Lemma~\ref{lem: simon}, $N_i(x,\varepsilon) < \infty$ for each $i,x$ and $\varepsilon$. For fixed $\varepsilon$, the distribution of the variables $\{N_i(x,\varepsilon)~:~ x \in \Z^d\}$ under $M^*$ is translation-invariant. For a site $x$ define the event $A_{x,\varepsilon}(N)$ that $N_i(x,\varepsilon)\leq N$. Denoting by $I[A_{x,\varepsilon}(N)]$ the indicator of the event $A_{x,\varepsilon}(N)$, we have that by translation-invariance,
\[
F(N,\varepsilon) := \lim_{\Lambda \to \Z^d} \frac{1}{|\Lambda|}\sum_{x \in \Lambda} I[A_{x,\varepsilon}(N)]
\]
exists $M^*$-a.s. Furthermore, for fixed $\varepsilon>0$, $\lim_{N \to \infty} F(N,\varepsilon) = 1$ $M^*$-a.s.

Fix $N$ such that with $M^*$-probability greater than $1-\varepsilon$, there exists a random set $S$ of vertices with (well-defined) density greater than $1-\varepsilon$ such that for each $x\in S$, we have $N_i(x,\varepsilon)\leq N$. Then for $x \in S \cap \Lambda$,
\[
\sum_{y \in \Lambda} |\rho(\sigma_x\sigma_y)-\rho(\sigma_x)\rho(\sigma_y)| \leq 2N^d + \varepsilon |\Lambda|\ ,
\]
so that
\begin{eqnarray*}
\limsup_{\Lambda \to \Z^d} \frac{1}{|\Lambda|^2} \sum_{x,y \in \Lambda} | \rho(\sigma_x\sigma_y) - \rho(\sigma_x)\rho(\sigma_y)| 
&\leq&  \limsup_{\Lambda \to \Z^d} \left[ \frac{1}{|\Lambda|} \sum_{x \in S\cap \Lambda} \left[ \frac{2N^d}{|\Lambda|} + \varepsilon\right] + 2 \frac{|\Lambda \setminus S|}{|\Lambda|} \right] \\
&\leq & 3\varepsilon\ .
\end{eqnarray*}
Therefore with $M^*$-probability at least $1-\varepsilon$, when we replace the limit in \eqref{eq: mixing2} by limsup, it is bounded between 0 and $3\varepsilon$. Letting $\varepsilon \to 0$, we get \eqref{eq: mixing2} with $M^*$-probability 1.

\end{document}